\DeclareMathOperator{\diag}{diag}
\DeclareMathOperator{\Tr}{Tr}
\DeclareMathOperator{\Ad}{Ad}
\DeclareMathOperator{\ad}{ad}
\newcommand{\n}{^{-1}}
\newcommand\lie{\mathfrak}
\newcommand{\hhh}{\lie{h}}
\newcommand{\kk}{\lie{k}}
\newcommand\bb{\mathbb}
\newcommand\R{\bb{R}} 
\newcommand\CC{\bb{C}}
\newcommand\parallelogram[1][2]{%
    \psset{unit=#1 pt}
    \begin{pspicture}(4,3)
        \pspolygon(0,0)(3,0)(4,3)(1,3)
    \end{pspicture}}
\newcommand\mtrap[1][2]{%
    \psset{unit=#1pt}
    \begin{pspicture}(4,3)
        \pspolygon(0,0)(4,0)(3,3)(1,3)
    \end{pspicture}}
\newcommand\wtrap[1][2]{%
    \psset{unit=#1pt}
    \begin{pspicture}(4,3)
        \pspolygon(0,3)(4,3)(3,0)(1,0)
    \end{pspicture}}
\newtheorem{theorem}{Theorem}[section]
\newtheorem{proposition}[theorem]{Proposition}
\newtheorem{lemma}[theorem]{Lemma}
\theoremstyle{definition}
\newenvironment{example}
  {\pushQED{\qed}\examplex}
  {\popQED\endexamplex}
\theoremstyle{remark}
\newtheorem{remark}[theorem]{Remark}
\newcommand\nnfootnote[1]{%
  \begin{NoHyper}
  \renewcommand\thefootnote{}\footnote{#1}%
  \addtocounter{footnote}{-1}%
  \end{NoHyper}
}
\begin{document}
\title{Local normal forms for multiplicity free $U(n)$ actions on coadjoint orbits}
\author{Jeremy Lane}
\newcommand{\Addresses}{{
  \textsc{Department of Mathematics \& Statistics, McMaster University, Hamilton Hall, 1280 Main Street W, Hamilton, ON, L8S 4K1, Canada}\par\nopagebreak
  \textit{E-mail address}: \texttt{lanej5@math.mcmaster.ca}
}}
\date{\today}
\maketitle

\nnfootnote{\emph{Keywords: coadjoint orbits, multiplicity free spaces, Gelfand-Zeitlin, integrable systems.} }

\begin{abstract}
Actions of $U(n)$ on $U(n+1)$ coadjoint orbits via embeddings of $U(n)$ into $U(n+1)$ are an important family of examples of multiplicity free spaces. They are related to Gelfand-Zeitlin completely integrable systems and multiplicity free branching rules in representation theory. This paper computes the Hamiltonian local normal forms of all such actions, at arbitrary points, in arbitrary $U(n+1)$ coadjoint orbits. The results are described using combinatorics of interlacing patterns; gadgets that describe the associated Kirwan polytopes. 
\end{abstract}

\section{Introduction}

A Hamiltonian action of a compact connected Lie group $K$ on compact symplectic manifold $(M,\omega)$ with an equivariant moment map is a \emph{multiplicity free space} if the ring of $K$-invariant functions $C^{\infty}(M)^K$ is a commutative Poisson subalgebra \cite{GS3}. The moment map of a multiplicity free space identifies the orbit space, $M/K$, with a convex polytope called the \emph{Kirwan polytope} after \cite{kirwan}.  Compact multiplicity free spaces are classified by their Kirwan polytope and the principal isotropy subgroup of the action \cite{knop1}. The local classification of multiplicity free spaces (in a neighbourhood of an orbit) is a crucial step in the proof of the  classification theorem for compact multiplicity free spaces. It is equivalent to the classification of smooth affine spherical varieties for $G = K^\CC$. Smooth affine spherical varieties are classified by their weight monoids \cite{losev}.

One particularly concrete family of examples of multiplicity free spaces is provided by the action of a unitary group, $U(n)$, on a coadjoint orbit of the unitary group $U(n+1)$ via an embedding of $U(n)$ into $U(n+1)$ (Section~\ref{section the spaces}).  The Kirwan polytopes of these spaces  can be described as the set of points $(\mu_1, \dots , \mu_n)\in \R^n$ that satisfy the so-called \emph{interlacing inequalities},
\begin{equation}\label{interlacing inequalities}
	\lambda_1 \geq \mu_1 \geq \lambda_{2} \geq \mu_2 \geq  \dots \geq \mu_n \geq \lambda_{n+1},
\end{equation}
where $\lambda_1, \dots ,\lambda_{n+1} \in \R$ are fixed parameters determined by the coadjoint orbit.  The main result of this paper (Theorem \ref{main theorem}) is the computation of the local classifying data of these spaces at arbitrary points in arbitrary $U(n+1)$ orbits.  This result has two interesting features. First, the classifying data are described in terms of combinatorial gadgets called \emph{interlacing patterns} that encode the combinatorics of the Kirwan polytope (see Section~\ref{section interlacing patterns}). An example of an interlacing pattern is illustrated below. It corresponds to certain points in $U(8)$ coadjoint orbits diffeomorphic to $U(8)/U(2) \times U(1)\times U(2) \times U(1)\times U(1)\times U(1)$. 
\begin{equation*}
\begin{tikzpicture}[scale =.7,line cap=round,line join=round,>=triangle 45,x=1cm,y=1cm,scale=1.5, every node/.style={scale=1.5}]
\begin{scriptsize}

\foreach \i in {1,...,8}
{
	\draw [fill=black] (\i-1,0) circle (2.5pt);
}
\draw[color=black] (0,0.3) node {6};
\draw[color=black] (1,0.3) node {6};
\draw[color=black] (2,0.3) node {5};
\draw[color=black] (3,0.3) node {3};
\draw[color=black] (4,0.3) node {3};
\draw[color=black] (5,0.3) node {2};
\draw[color=black] (6,0.3) node {1};
\draw[color=black] (7,0.3) node {0};

\foreach \i in {1,...,7}
{
	\draw [fill=black] (\i-.5,-1) circle (2.5pt);
}

\draw[color=black] (.5,-1.3) node {6};
\draw[color=black] (1.5,-1.3) node {5};
\draw[color=black] (2.5,-1.3) node {4};
\draw[color=black] (3.5,-1.3) node {3};
\draw[color=black] (4.5,-1.3) node {3};
\draw[color=black] (5.5,-1.3) node {1};
\draw[color=black] (6.5,-1.3) node {1};

\draw [line width=2pt] (0,0)-- (.5,-1);
\draw [line width=2pt] (1,0)-- (.5,-1);
\draw [line width=2pt] (0,0)-- (1,0);

\draw [line width=2pt] (2,0)-- (1.5,-1);

\draw [line width=2pt] (3,0)-- (3.5,-1);
\draw [line width=2pt] (3,0)-- (4,0);
\draw [line width=2pt] (4,0)-- (3.5,-1);
\draw [line width=2pt] (4,0)-- (4.5,-1);
\draw [line width=2pt] (4.5,-1)-- (3.5,-1);

\draw [line width=2pt] (5.5,-1)-- (6.5,-1);
\draw [line width=2pt] (5.5,-1)-- (6,0);
\draw [line width=2pt] (6.5,-1)-- (6,0);

\end{scriptsize}
\end{tikzpicture}
\end{equation*}
The second interesting feature is the proof (given in Section~\ref{proof of theorem section}). Rather than using the classification of smooth affine spherical varieties, the classifying data are computed directly by elementary means. Following several standard reductions, the main step in this proof is the explicit computation of the isotropy representations (Section~\ref{section computation of isotropy rep}).  It is shown that they are certain products of standard representations and trivial representations of factors of the isotropy subgroup, which has a block diagonal form. The block diagonal factors of the isotropy subgroup that act by standard representations correspond to ``parallelogram shapes'' that appear in the interlacing pattern. For example, the isotropy subgroup corresponding to the interlacing pattern above is $U(1) \times U(1) \times 1 \times U(2) \times U(1)$ and the isotropy representation is $\{0\} \oplus \CC \oplus\{0\} \oplus \CC^2\oplus\{0\}$ (see Example \ref{example interlacing pattern}). The computation of this representation relies on the relationship between the combinatorics of interlacing patterns and divisibility properties of characteristic polynomials of certain Hermitian matrices. 

Motivation for this work is provided by the \emph{Gelfand-Zeitlin}\footnote{Also spelled Gelfand-Cetlin and Gelfand-Tsetlin.} commutative completely integrable systems \cite{GS1}. Although Gelfand-Zeitlin systems have been studied extensively in recent years (see e.g.\ \cite{ALL,BMZ,CKO,L2}), very little is known about their local normal forms as integrable systems near singular fibers (see Example \ref{alamiddine example}). An ongoing program aims to use the results of this paper to prove topological and symplectic local normal forms for Gelfand-Zeitlin systems.   The multiplicity free spaces studied in this paper, as well as the associated Gelfand-Zeitlin systems, have analogues for orthogonal groups and orthogonal coadjoint orbits. The local models of those multiplicity free spaces can be computed in a similar fashion.

The author would like to thank Yael Karshon who some years ago provided him with  notes from a lecture on Gelfand-Zeitlin systems by N.T. Zung that inspired  this paper. The author would also like to thank the Fields Institute and the organizers of the thematic program on Toric Topology and Polyhedral Products for the support of a Fields Postdoctoral Fellowship during writing of this paper.

\section{Hamiltonian group actions and local normal forms}\label{background section}

This section fixes conventions, notation, and recalls the statement of the Marle-Guillemin-Sternberg local normal form. Standard references are \cite{audin,GS4} modulo conventions.

\subsection{Hamiltonian group actions}

Let $K$ be a connected Lie group. Denote its Lie algebra by $\kk$, the dual vector space by $\kk^*$, and the dual pairing by $\langle\cdot ,\cdot\rangle$. Let $\Ad$ and $\Ad^*$ denote the adjoint and coadjoint actions respectively, i.e.~$\langle\Ad_k^*\xi,X\rangle = \langle \xi, \Ad_{k\n} X\rangle$ for $k \in K$, $\xi \in \kk^*$, and $X \in \kk$. Given a left action of $K$ on a manifold $M$, the fundamental vector field of $X \in \kk$ is
\[
	\underline{X}_p = \frac{d}{dt}\bigg\vert_{t=0}\exp(tX)\cdot p,\qquad p\in M.
\]

	Let $(M,\omega)$ a symplectic manifold. A left action of $K$ on $M$ is \emph{Hamiltonian} if there exists an equivariant map $\Phi\colon M \to \kk^*$ such that 
	\[
		\iota_{\underline{X}}\omega = d\langle \Phi,X\rangle.
	\]
	A map $\Phi$ with this property is called a \emph{moment map}. The tuple $(M,\omega,\Phi)$ is a \emph{Hamiltonian $K$-manifold}.
Hamiltonian $K$-manifolds $(M,\omega,\Phi)$ and $(M',\omega',\Phi')$ are \emph{isomorphic} if there exists a $K$-equivariant, symplectic diffeomorphism $\varphi\colon (M,\omega) \to (M',\omega')$ such that $\Phi' \circ \varphi = \Phi$.

\begin{example}[Coadjoint orbits]\label{coadjoint orbits example}
	Let $\mathcal{O} \subset \kk^*$ an orbit of the coadjoint action of $K$.  Given $\xi \in \mathcal{O}$, the tangent space $T_\xi \mathcal{O} \subset \kk^*$ is the set of elements of the form $\ad_X^*\xi$, $X\in \kk$. The \emph{Kostant-Kirillov-Souriau} symplectic form $\omega_{\mathrm{KKS}}$ on $\mathcal{O}$ is defined pointwise by the formula
	\[
		(\omega_{\mathrm{KKS}})_\xi(\ad_X^*\xi,\ad_Y^*\xi) = \langle \xi, [X,Y]\rangle.
	\]
	The inclusion map $\iota \colon \mathcal{O} \to \kk^*$ is a moment map for the coadjoint action of $K$ on $(\mathcal{O},\omega_{\mathrm{KKS}})$.  
\end{example}

\begin{example}[Homomorphisms]\label{homomorphism example}
	Let $(M,\omega,\Phi)$ a Hamiltonian $K$-manifold, $H$ a Lie group, and $\varphi\colon H \to K$ a Lie group homomorphism.  Let $(d\varphi)^*\colon \kk^* \to \hhh^*$ denote the linear map dual to $d\varphi\colon \hhh \to \kk$.  Then the action of $H$ on $M$ defined via the action of $K$ and the homomorphism $\varphi$ is Hamiltonian and $(d\varphi)^* \circ \Phi$ is a moment map.
\end{example}

Let $U(n)$ denote the group of $n\times n$ unitary matrices, with Lie algebra $\mathfrak{u}(n)$, and let $\mathcal{H}_n$ denote the set of $n\times n$ Hermitian matrices, $X = X^\dagger$, where $X \mapsto X^\dagger$ denotes conjugate transpose. Fix the isomorphism 
\begin{equation}\label{identification}
	\mathcal{H}_n \to \mathfrak{u}(n)^*, \quad X \mapsto \left(A \mapsto \frac{1}{\sqrt{-1}}\Tr(XA)\right).
\end{equation}
It is equivariant with respect to the action of $U(n)$ on $\mathcal{H}_n$ by conjugation, $k\cdot X = kXk^\dagger$. 

\begin{example}[Representations]\label{unitary representation example}
	Identify $\CC^n \cong M_{n\times 1}(\CC)$.  The standard symplectic form on $\CC^n$ is 
	\begin{equation}\label{omega std definition}
		\omega_{\mathrm{std}}(\mathbf{x},\mathbf{y}) = \frac{1}{2\sqrt{-1}}(\mathbf{x}^\dagger\mathbf{y} - \mathbf{y}^\dagger\mathbf{x}), \quad \mathbf{x},\mathbf{y} \in M_{n\times 1}(\CC).
	\end{equation}
	The action of $U(n)$ on $\CC^n$ by the standard representation is Hamiltonian with moment map
	\begin{equation}\label{standard representation moment map}
		\Phi(\mathbf{x}) = -\frac{1}{2}\mathbf{x}\mathbf{x}^\dagger.
	\end{equation}
	More generally, suppose that $V$ is a real vector space equipped with a linear symplectic form $\omega_V$. Let $\rho\colon K \to Sp(V,\omega_V)$ be a representation of $K$ on $V$ by symplectic transformations. Then the action of $K$ on $(V, \omega_V)$ defined by $\rho$ is Hamiltonian with moment map $\Phi_V$ defined by the condition
	\begin{equation}\label{unitary action moment map}
		\frac{1}{2}\omega_V(d\rho(X)\mathbf{v},\mathbf{v}) = \langle\Phi_V(\mathbf{v}),X\rangle, \quad \forall \mathbf{v}\in V. \qedhere
	\end{equation}

\end{example}

\begin{example}[Isotropy representations]\label{isotropy representation example}
Let $(M,\omega,\Phi)$ a Hamiltonian $K$-manifold. Given $p\in M$, let $K\cdot p$ denote the orbit of the action of $K$ through $p$ and let $K_p \leq K$ denote the \emph{isotropy subgroup}; the subgroup of elements that fix $p$. Let $K_{\Phi(p)}$ denote the isotropy subgroup of $\Phi(p)$. Then $K_p \leq K_{\Phi(p)}$. The \emph{symplectic slice at} $p\in M$ is the vector space 
\[
	W_p = T_p(K\cdot p)^\omega/(T_p(K\cdot p)\cap T_p(K\cdot p)^\omega)
\]
where $T_p(K\cdot p)^\omega$ denotes the subspace of elements $X \in T_pM$ such that $\omega_p(X,Y) = 0$ for all $Y\in T_p(K\cdot p)$.  The restriction of $\omega_p$ to $T_p(K\cdot p)^\omega$ descends to a symplectic form on $W_p$ denoted $\overline{\omega}_p$.  The linearization of the action of $K_p$, a.k.a.\ the \emph{isotropy representation}, preserves the subspaces $T_p(K\cdot p)^\omega$ and $T_p(K\cdot p)\cap T_p(K\cdot p)^\omega$, so it descends to an action of $K_p$ on $(W_p,\overline{\omega}_p)$ by symplectic transformations. Thus $(W_p,\overline{\omega}_p,\Phi_W)$ is a Hamiltonian $K_p$-manifold, where $\Phi_W$ is defined as in Example \ref{unitary representation example}.
\end{example}

\subsection{Marle-Guillemin-Sternberg local normal forms}

Given a connected Lie group $K$, \emph{Marle-Guillemin-Sternberg data} (MGS data) is a tuple $(\xi,L,W,\omega_W)$ where $\xi \in \kk^*$, $L$ is a Lie subgroup of $K_\xi$, and $(W,\omega_W)$ is a symplectic vector space equipped with a representation of $L$ by symplectic transformations.

Given MGS data $(\xi,L,W,\omega_W)$, \cite{GS4,MARL} construct a Hamiltonian $K$-manifold, denoted $M(\xi,L,W,\omega_W)$, with the following properties. Let $\mathfrak{m} = \kk_\xi/\mathfrak{l}$ and identify $\mathfrak{m}^*$ with a $L$-invariant complement of $\mathfrak{l}^*$ in $\kk_\xi^*$. As a manifold, $M(\xi,L,W,\omega_W)$ is the total space of the vector bundle
\begin{equation}\label{diffeomorphic description of the model space}
	 K \times_{L} (\mathfrak{m}^* \times W) \to K/L
\end{equation}
associated to the principal bundle $L\to K \to K/L$ and the representation $\mathfrak{m}^* \times W$.
The symplectic structure on $M(\xi,L,W,\omega_W)$ is determined by the data $(\xi,L,W,\omega_W)$ (see \cite{GS4,GS5,MARL} for more details).  With respect to this diffeomorphic description of $M(\xi,L,W,\omega_W)$, the Hamiltonian action of $K$ and the corresponding moment map are  
\begin{equation}\label{normal form action and moment map}
	\begin{split}
		k'\cdot[k,\eta,w] & = [k'k,\eta,w],\\ 
		\Phi([k,\eta,w]) & = \Ad_k^*(\eta + \Phi_W(w)+ \xi).
	\end{split}
\end{equation}

Let $(M,\omega,\Phi)$ be a Hamiltonian $K$-manifold. The \emph{Marle-Guillemin-Sternberg data of a point} $p\in M$ is $(\Phi(p),K_p,W_p,\overline{\omega}_p)$, where $K_p$ is the isotropy subgroup of $p$ and $(W_p,\overline{\omega}_p)$ is the symplectic slice at $p$ equipped with the isotropy representation of $K_p$ as described in Example \ref{isotropy representation example}.

\begin{theorem}[Marle-Guillemin-Sternberg local normal forms]\label{mgs theorem}\cite{GS5,MARL}
	Let $(M,\omega,\Phi)$ a Hamiltonian $K$-manifold. For all $p\in M$ there exists $K$-invariant neighbourhoods $U \subset M$ of the orbit $K\cdot p$ and $U' \subset M(\Phi(p),K_p,W_p,\overline{\omega}_p)$ of the orbit $K\cdot [e,0,0]$ and an isomorphism of Hamiltonian $K$-manifolds $\varphi\colon U \to U'$ such that $\varphi(p) = [e,0,0]$.
\end{theorem}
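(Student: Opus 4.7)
The plan is to follow the classical Marle-Guillemin-Sternberg strategy, combining an equivariant tubular neighbourhood construction with an equivariant Moser trick. Write $L = K_p$, $\mathfrak{l} = \Lie(L)$, and $\mathfrak{m} = \kk_{\Phi(p)}/\mathfrak{l}$, and fix an identification of $\mathfrak{m}^*$ with an $L$-invariant complement of $\mathfrak{l}^\perp$ in $\kk_{\Phi(p)}^*$, exactly as in the construction of the model.

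First I would carry out the linear algebra at $p$. Equivariance of the moment map yields the standard identities $T_p(K\cdot p)^\omega = \ker d\Phi_p$ and $T_p(K\cdot p)\cap T_p(K\cdot p)^\omega = \{\underline{X}_p : X\in\kk_{\Phi(p)}\}$, so the radical of $\omega_p$ on the orbit tangent space is the image of $\mathfrak{m}$ under the fundamental vector field map. Fixing an $L$-invariant inner product on $T_pM$ (available by averaging, since $L$ is compact in all applications of interest), I would produce an $L$-equivariant decomposition
\[
	T_pM \;\cong\; \left(\kk/\kk_{\Phi(p)}\right) \oplus \mathfrak{m} \oplus \mathfrak{m}^* \oplus W_p,
\]
in which the first two summands together form $T_p(K\cdot p)$, the second and fourth together form $T_p(K\cdot p)^\omega$, and $\omega_p$ takes a canonical shape determined by the natural pairing $\mathfrak{m}\times\mathfrak{m}^*\to\R$ and by $\overline{\omega}_p$ on $W_p$. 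Consequently the normal bundle fibre $T_pM/T_p(K\cdot p)$ is $L$-equivariantly isomorphic to $\mathfrak{m}^*\oplus W_p$, matching (\ref{diffeomorphic description of the model space}).

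Next I would invoke the equivariant tubular neighbourhood theorem to produce a $K$-equivariant diffeomorphism $\psi$ from a $K$-invariant open neighbourhood $U$ of $K\cdot p$ onto a $K$-invariant open neighbourhood $U'$ of the zero section of $K\times_{L}(\mathfrak{m}^*\oplus W_p)$, with $\psi(p) = [e,0,0]$. Pulling back $\omega$ along $\psi^{-1}$ gives a closed $K$-invariant 2-form $\omega_0$ on $U'$, while the model carries its own closed $K$-invariant 2-form $\omega_1$. By construction both forms agree pointwise along the zero section $K\cdot[e,0,0]$, so the convex combinations $\omega_t = (1-t)\omega_0 + t\omega_1$ are symplectic on a possibly smaller $K$-invariant neighbourhood. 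The equivariant relative Poincar\'e lemma produces a $K$-invariant 1-form $\alpha$ with $d\alpha = \omega_1 - \omega_0$ and $\alpha$ vanishing on the zero section; solving the Moser equation $\iota_{X_t}\omega_t = -\alpha$ and integrating yields a $K$-equivariant diffeomorphism $\varphi_M$, defined on a yet smaller $K$-invariant neighbourhood and fixing the zero section pointwise, with $\varphi_M^*\omega_1 = \omega_0$.

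Setting $\varphi = \varphi_M\circ\psi$ gives a $K$-equivariant symplectomorphism sending $p$ to $[e,0,0]$, so it remains only to check that it intertwines moment maps. Since $K$ is connected, $\Phi\circ\varphi^{-1}$ and the model moment map (\ref{normal form action and moment map}) are both moment maps for the same Hamiltonian $K$-action on the connected open set $U'$, so they differ by a constant in $(\kk^*)^K$; this constant vanishes because both maps send $[e,0,0]$ to $\Phi(p)$. The main technical obstacle is the Moser step: producing $\alpha$ with the required vanishing on the zero section and controlling the size of the neighbourhood on which the time-dependent flow is defined are delicate but standard, and I would refer to \cite{GS5,MARL} for the complete verifications.
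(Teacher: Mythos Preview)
Your proposal is a faithful sketch of the classical Marle--Guillemin--Sternberg argument, but the paper does not supply its own proof of this theorem: it is stated as a background result with a citation to \cite{GS5,MARL} and then used as a black box. So there is nothing in the paper to compare your argument against beyond those references, and your outline is precisely the standard route taken there.
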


	Hamiltonian $K$-manifolds $(M,\omega,\Phi)$ and $(M',\omega',\Phi')$ 
	are \emph{equivalent} if there exists an automorphism $\psi$ of $K$, a symplectomorphism $F \colon (M,\omega) \to (M',\omega')$, and an $\Ad_K^*$-fixed element $\xi \in \kk^*$ such that:
	\begin{enumerate}
		\item $\psi(k)\cdot F(m) = F(k\cdot m)$, and
		\item $\Phi + \xi = (d\psi)^* \circ \Phi' \circ F$.
	\end{enumerate}

Marle-Guillemin-Sternberg data $(\xi,L,W,\omega_W)$ and $(\xi',L',W',\omega_{W'})$ for $K$ are \emph{equivalent} if the corresponding model spaces  are equivalent as Hamiltonian $K$-manifolds.  
For instance, if $p$ and $p'$ are in the same $K$-orbit, then the MGS data of $p$ and $p'$ are equivalent.

\section{Statement of the main theorem}\label{statement of theorem section}

The following notation will be useful in the remainder of the paper. Given a sequence of real numbers $\underline{\tau} = (\tau_1, \dots ,\tau_n)$, let $[\underline{\tau}]$ denote the set of elements in $\underline{\tau}$. Let $\underline{\tau}_i$ denote the $i$th element of $[\underline{\tau}]$ in decreasing order. Let $m(\underline{\tau})$ denote the size of $[\underline{\tau}]$. Let $n_\tau(\underline{\tau})$ denote the number of times $\tau$ occurs in $\underline{\tau}$. Let $n_i(\underline{\tau})$ denote the number of times $\underline{\tau}_i$ occurs in $\underline{\tau}$.

\subsection{Multiplicity free $U(n)$ actions on $U(n+1)$ coadjoint orbits}\label{section the spaces}

Given a non-increasing sequence of real numbers $\underline{\lambda} = (\lambda_1,\dots, \lambda_{n+1})$, let $\mathcal{O}_\Lambda$ denote the set of matrices in $\mathcal{H}_{n+1}$ with eigenvalues $\lambda_1, \dots , \lambda_{n+1}$. Then $\mathcal{O}_\Lambda$ is the orbit of 
\begin{equation}\label{lambda}
	\Lambda := \left(\begin{array}{ccc}
		\lambda_1 & & \\
		 & \ddots & \\
		 & & \lambda_{n+1}
	\end{array}\right)
\end{equation}
under the action of $U(n+1)$ by conjugation and the map $k \mapsto k\lambda k^\dagger$ descends to a $U(n+1)$-equivariant diffeomorphism
\begin{equation}
	U(n+1) / U(n_1(\underline{\lambda})) \times \dots \times U(n_{m(\underline{\lambda})}(\underline{\lambda})) \to \mathcal{O}_\Lambda.
\end{equation}
The map \eqref{identification} defines a $U(n)$-equivariant diffeomorphism of $\mathcal{O}_\Lambda$ with a coadjoint orbit of $U(n+1)$.  Let $\omega_\Lambda$ denote the symplectic form on $\mathcal{O}_\Lambda$ defined by this identification and the Kostant-Kirillov-Souriau symplectic form defined in Example \ref{coadjoint orbits example}.  For all $p \in \mathcal{O}_\Lambda$,
\begin{equation}\label{symplectic form equation}
	(\omega_\Lambda)_p([X,p],[Y,p])  = \frac{1}{\sqrt{-1}}\Tr\left(p[X,Y]\right) \quad \forall  X,Y \in \mathfrak{u}(n+1).
\end{equation}
With respect to \eqref{identification}, $(\mathcal{O}_\Lambda,\omega_\Lambda,\iota\colon \mathcal{O}_\Lambda \to \mathcal{H}_{n+1})$ is a Hamiltonian $U(n+1)$-manifold, where $\iota$ denotes inclusion. Let $K = U(n)$ and let $\varphi\colon K \to U(n+1)$ be an embedding of $K$ as a Lie subgroup of $U(n+1)$. With respect to the identification \eqref{identification}, $(d\varphi)^*$ is a linear projection $\mathcal{H}_{n+1} \to  \mathcal{H}_n$. By Example \ref{homomorphism example}, $(\mathcal{O}_\Lambda,\omega_\Lambda,\Phi)$ is a Hamiltonian $K$-manifold  with moment map 
\begin{equation}\label{un action moment map}
	\Phi = (d\varphi)^*\circ \iota\colon \mathcal{O}_\Lambda \to \mathcal{H}_{n}.
\end{equation}
	It is well-known that $(\mathcal{O}_\Lambda,\omega_\Lambda,\Phi)$ are multiplicity free spaces for all possible choices of $\underline{\lambda}$ and $\varphi$ (this follows from Lemma~\ref{risquare} below).
\subsection{Interlacing patterns}\label{section interlacing patterns}

Let $\underline{\lambda} = (\lambda_1, \dots,\lambda_{n+1})$ and $\underline{\mu} = (\mu_1, \dots, \mu_n)$ be non-increasing sequences of numbers that satisfy the interlacing inequalities \eqref{interlacing inequalities}. The inequalities \eqref{interlacing inequalities} are represented by attaching labels to a fixed set of $2n+1$ vertices arranged on a triangular grid as illustrated by the following example. 
\begin{equation}
\begin{tikzpicture}[scale =.7,line cap=round,line join=round,>=triangle 45,x=1cm,y=1cm,scale=1.5, every node/.style={scale=1.5}]

\begin{scriptsize}
\foreach \i in {1,...,8}
{
	\draw [fill=black] (\i-1,0) circle (2.5pt);
	\draw[color=black] (\i-1,0.3) node {$\lambda_{\i}$};
}
\foreach \i in {1,...,7}
{
	\draw [fill=black] (\i-.5,-1) circle (2.5pt);
	\draw[color=black] (\i-.5,-1.3) node {$\mu_{\i}$};
}

\end{scriptsize}
\end{tikzpicture}
\end{equation}
If a vertex labelled $x$ appears to the left of a vertex labelled $y$, then $x \geq y$. The labels on the top row correspond to $\underline{\lambda}$ and the labels on the bottom row correspond to $\underline{\mu}$.

The (labelled) \emph{interlacing pattern} of a pair of sequences $(\underline{\lambda},\underline{\mu})$ that satisfy \eqref{interlacing inequalities} is the labelled undirected plane graph obtained by adding straight edges to the diagram above according to the following rule:  two vertices are connected by an edge iff they are nearest neighbours and their labels are equal. For example, the following is the interlacing pattern of $(\underline{\lambda},\underline{\mu})$ where $\underline{\lambda} = (6,6,5,3,3,2,1,0)$ and $\underline{\mu} = (6,5,4,3,3,1,1)$.
\begin{equation}\label{fig2}
\begin{tikzpicture}[scale =.7,line cap=round,line join=round,>=triangle 45,x=1cm,y=1cm,scale=1.5, every node/.style={scale=1.5}]
\begin{scriptsize}

\foreach \i in {1,...,8}
{
	\draw [fill=black] (\i-1,0) circle (2.5pt);
}
\draw[color=black] (0,0.3) node {6};
\draw[color=black] (1,0.3) node {6};
\draw[color=black] (2,0.3) node {5};
\draw[color=black] (3,0.3) node {3};
\draw[color=black] (4,0.3) node {3};
\draw[color=black] (5,0.3) node {2};
\draw[color=black] (6,0.3) node {1};
\draw[color=black] (7,0.3) node {0};

\foreach \i in {1,...,7}
{
	\draw [fill=black] (\i-.5,-1) circle (2.5pt);
}

\draw[color=black] (.5,-1.3) node {6};
\draw[color=black] (1.5,-1.3) node {5};
\draw[color=black] (2.5,-1.3) node {4};
\draw[color=black] (3.5,-1.3) node {3};
\draw[color=black] (4.5,-1.3) node {3};
\draw[color=black] (5.5,-1.3) node {1};
\draw[color=black] (6.5,-1.3) node {1};

\draw [line width=2pt] (0,0)-- (.5,-1);
\draw [line width=2pt] (1,0)-- (.5,-1);
\draw [line width=2pt] (0,0)-- (1,0);

\draw [line width=2pt] (2,0)-- (1.5,-1);

\draw [line width=2pt] (3,0)-- (3.5,-1);
\draw [line width=2pt] (3,0)-- (4,0);
\draw [line width=2pt] (4,0)-- (3.5,-1);
\draw [line width=2pt] (4,0)-- (4.5,-1);
\draw [line width=2pt] (4.5,-1)-- (3.5,-1);

\draw [line width=2pt] (5.5,-1)-- (6.5,-1);
\draw [line width=2pt] (5.5,-1)-- (6,0);
\draw [line width=2pt] (6.5,-1)-- (6,0);

\end{scriptsize}
\end{tikzpicture}
\end{equation}
Three types of connected components can occur in interlacing patterns: \wtrap\emph{-shapes}, \mtrap\emph{-shapes}, and \parallelogram\emph{-shapes}. In the example \eqref{fig2}: the components labelled 6, 2, and 0 are \wtrap-shapes, the components labelled 4 and 1 are \mtrap-shapes, and the components labelled 5 and 3 are \parallelogram-shapes. By convention, an isolated vertex on the top row is a \wtrap-shape and an isolated vertex on the bottom row is a \mtrap-shape.

If $\underline{\lambda} = (\lambda_1, \dots,\lambda_{n+1})$ is fixed, then the set of pairs $(\underline{\lambda},\underline{\mu})$ that satisfy \eqref{interlacing inequalities} (equivalently, the set of labelled interlacing patterns whose labels on the top row are given by $\underline{\lambda}$) is in bijection with elements of the polytope
\[
	\Delta_{\underline{\lambda}} := \{ \underline{\mu} = (\mu_1, \dots, \mu_n) \in \R^n \mid (\underline{\lambda},\underline{\mu}) \text{ satisfies \eqref{interlacing inequalities}}\}.
\]

Given $(\mathcal{O}_\Lambda,\omega_\Lambda,\Phi)$ as in the previous section, a point $p \in \mathcal{O}_\Lambda$ determines a pair $(\underline{\lambda},\underline{\mu})$ that satisfies \eqref{interlacing inequalities}, where  $\underline{\mu} = (\mu_1,\ldots , \mu_n)$ denotes the eigenvalues of $\Phi(p)$ arranged in non-increasing order. Thus, every $p \in \mathcal{O}_\Lambda$ has an associated labelled interlacing pattern. As observed in \cite{GS1}, the polytope $\Delta_{\underline{\lambda}}$ defined above is the Kirwan polytope of $(\mathcal{O}_\Lambda,\omega_\Lambda,\Phi)$, i.e.
\[
	\Delta_{\underline{\lambda}} = \{ (\mu_1, \dots, \mu_n) \in \R^n \mid \mu_1 \geq \dots \geq \mu_n, \, \exists p \in \mathcal{O}_\Lambda \text{ with eigenvalues }	\mu_1, \dots, \mu_n\}.
\]

The notation $\substack{ \lambda\in [\underline{\lambda}]\\ \text{\wtrap-shape}}$
denotes the set of all $\lambda\in [\underline{\lambda}]$ such that the connected component of the interlacing pattern of $(\underline{\lambda},\underline{\mu})$ labelled by $\lambda$ is a \wtrap-shape. Similar notation is used for other sets. For example, any pair $(\underline{\lambda},\underline{\mu})$ satisfying \eqref{interlacing inequalities} satisfies the identity
\begin{equation}
	\sum_{i=1}^{n+1} \lambda_i - \sum_{i=1}^n\mu_i = \sum_{\substack{ \lambda\in [\underline{\lambda}]\\ \text{\wtrap-shape}}} \lambda - \sum_{\substack{ \mu\in [\underline{\mu}]\\ \text{\mtrap-shape}}} \mu.
\end{equation}

\begin{remark}
An \emph{unlabelled interlacing pattern} is an undirected plane graph that can be obtained from a labelled interlacing pattern by erasing the labels. In other words, the edges in an unlabelled interlacing pattern must correspond to a configuration of equalities and strict inqualities that is allowed by \eqref{interlacing inequalities}. For instance, the following is an unlabelled interlacing pattern. 
\begin{equation}\label{fig3}
\begin{tikzpicture}[scale =.7,line cap=round,line join=round,>=triangle 45,x=1cm,y=1cm,scale=1.5, every node/.style={scale=1.5}]
\begin{scriptsize}

\foreach \i in {1,...,8}
{
	\draw [fill=black] (\i-1,0) circle (2.5pt);
}
\draw[color=black] (0,0.3) ;
\draw[color=black] (1,0.3) ;
\draw[color=black] (2,0.3) ;
\draw[color=black] (3,0.3) ;
\draw[color=black] (4,0.3) ;
\draw[color=black] (5,0.3) ;
\draw[color=black] (6,0.3) ;
\draw[color=black] (7,0.3) ;

\foreach \i in {1,...,7}
{
	\draw [fill=black] (\i-.5,-1) circle (2.5pt);
}

\draw[color=black] (.5,-1.3) ;
\draw[color=black] (1.5,-1.3) ;
\draw[color=black] (2.5,-1.3) ;
\draw[color=black] (3.5,-1.3) ;
\draw[color=black] (4.5,-1.3) ;
\draw[color=black] (5.5,-1.3) ;
\draw[color=black] (6.5,-1.3) ;

\draw [line width=2pt] (0,0)-- (.5,-1);
\draw [line width=2pt] (1,0)-- (.5,-1);
\draw [line width=2pt] (0,0)-- (1,0);

\draw [line width=2pt] (2,0)-- (1.5,-1);

\draw [line width=2pt] (3,0)-- (3.5,-1);
\draw [line width=2pt] (3,0)-- (4,0);
\draw [line width=2pt] (4,0)-- (3.5,-1);
\draw [line width=2pt] (4,0)-- (4.5,-1);
\draw [line width=2pt] (4.5,-1)-- (3.5,-1);

\draw [line width=2pt] (5.5,-1)-- (6.5,-1);
\draw [line width=2pt] (5.5,-1)-- (6,0);
\draw [line width=2pt] (6.5,-1)-- (6,0);

\end{scriptsize}
\end{tikzpicture}
\end{equation}
On the other hand, the following is not an unlabelled interlacing pattern.
\begin{equation}\label{fig4}
\begin{tikzpicture}[scale =.7,line cap=round,line join=round,>=triangle 45,x=1cm,y=1cm,scale=1.5, every node/.style={scale=1.5}]
\begin{scriptsize}

\foreach \i in {1,...,8}
{
	\draw [fill=black] (\i-1,0) circle (2.5pt);
}
\draw[color=black] (0,0.3) ;
\draw[color=black] (1,0.3) ;
\draw[color=black] (2,0.3) ;
\draw[color=black] (3,0.3) ;
\draw[color=black] (4,0.3) ;
\draw[color=black] (5,0.3) ;
\draw[color=black] (6,0.3) ;
\draw[color=black] (7,0.3) ;

\foreach \i in {1,...,7}
{
	\draw [fill=black] (\i-.5,-1) circle (2.5pt);
}

\draw[color=black] (.5,-1.3) ;
\draw[color=black] (1.5,-1.3) ;
\draw[color=black] (2.5,-1.3) ;
\draw[color=black] (3.5,-1.3) ;
\draw[color=black] (4.5,-1.3) ;
\draw[color=black] (5.5,-1.3) ;
\draw[color=black] (6.5,-1.3) ;

\draw [line width=2pt] (0,0)-- (.5,-1);
\draw [line width=2pt] (1,0)-- (.5,-1);
\draw [line width=2pt] (0,0)-- (1,0);

\draw [line width=2pt] (2,0)-- (1.5,-1);

\draw [line width=2pt] (3,0)-- (3.5,-1);
\draw [line width=2pt] (3,0)-- (4,0);
\draw [line width=2pt] (4,0)-- (3.5,-1);
\draw [line width=2pt] (4,0)-- (4.5,-1);

\draw [line width=2pt] (5.5,-1)-- (6.5,-1);
\draw [line width=2pt] (5.5,-1)-- (6,0);
\draw [line width=2pt] (6.5,-1)-- (6,0);

\end{scriptsize}
\end{tikzpicture}
\end{equation}
If $\underline{\mu}$ and $\underline{\mu}'$ are contained in the relative interior of the same face of $\Delta_{\underline{\lambda}}$, then the unlabelled interlacing patterns of $(\underline{\lambda},\underline{\mu})$ and $(\underline{\lambda},\underline{\mu}')$ are the same. Thus the set of unlabelled interlacing patterns obtained by erasing labels from labelled interlacing patterns of pairs $(\underline{\lambda},\underline{\mu})$, $\underline{\lambda}$ fixed, is in natural bijection with the set of faces of $\Delta_{\underline{\lambda}}$. The partial order on faces of $\Delta_{\underline{\lambda}}$ corresponds to an obvious partial order on the set of all such unlabelled interlacing patterns. Thus, they encode  $\Delta_{\underline{\lambda}}$ as an abstract polytope. It is also straightforward to read the local moment cone of a point $\underline{\mu} \in \Delta_{\underline{\lambda}}$ from the unlabelled interlacing pattern of $(\underline{\lambda},\underline{\mu})$. The intersection of this local moment cone with the standard lattice in $\R^n$ is the weight monoid of the corresponding smooth affine spherical variety that appears in the classification of \cite{knop1}.
\end{remark}

\begin{remark}
	The interlacing patterns described here occur as rows in larger diagrams, also called interlacing patterns, that describe points and faces of Gelfand-Zeitlin polytopes as well as fibers of Gelfand-Zeitlin systems (see e.g.~\cite{ACK,CKO,P,BMZ}). Some authors use an equivalent combinatorial gadget called \emph{ladder diagrams} and introduce terminology such as W-blocks, M-blocks, and N-blocks that is equivalent to the notions of \wtrap\emph{-shapes}, \mtrap\emph{-shapes}, and \parallelogram\emph{-shapes} used here. 
\end{remark}

\subsection{Statement of the main theorem}

Let $K=U(n)$ and let $(\underline{\lambda},\underline{\mu})$ be a pair of non-increasing sequences $\underline{\lambda} = (\lambda_1, \dots ,\lambda_{n+1})$ and $\underline{\mu}= (\mu_1,\dots, \mu_n)$ that satisfy the interlacing inequalities \eqref{interlacing inequalities}. Let $\mathrm{M}:= \text{diag}(\mu_1, \dots ,\mu_n)$.  The stabilizer subgroup $K_\mathrm{M}$ for the conjugation action of $K$ is a block diagonal subgroup isomorphic to $U(n_1(\underline{\mu}))\times \dots \times U(n_{m(\underline{\mu})}(\underline{\mu}))$.
Define 
\begin{equation}\label{topological data defn}
	W_{(\underline{\lambda},\underline{\mu})}  := \bigoplus_{\substack{\mu \in [\underline{\mu}]\\ \text{\parallelogram-shape}}}\CC^{n_\mu(\underline{\mu})},
\end{equation}
and the block-diagonal subgroup 
\begin{equation}
	L_{(\underline{\lambda},\underline{\mu})}   :=  L_1 \times \dots \times L_{m(\underline{\mu})} \leq U(n_1(\underline{\mu}))\times \dots \times U(n_{m(\underline{\mu})}(\underline{\mu})) =  K_\mathrm{M}
\end{equation}
where
\begin{equation}
	L_i = 
		\left\{\left(\begin{array}{c|c} 1 & 0 \\ \hline 0 & k\end{array}\right)\mid k\in  U(n_i(\underline{\mu})-1)\right\}\leq U(n_i(\underline{\mu}))
\end{equation}
if the component of the interlacing pattern of $(\underline{\lambda},\underline{\mu})$ labelled $\underline{\mu}_i$ is a \mtrap-shape, and $L_i  = U(n_i(\underline{\mu}))$ otherwise.  Equip $W_{(\underline{\lambda},\underline{\mu})}$ with the representation of $L_{(\underline{\lambda},\underline{\mu})}$ where the factor $L_i $ acts by the standard representation on the corresponding factor $\CC^{n_i(\underline{\mu})}$ if the component of the interlacing pattern of $(\underline{\lambda},\underline{\mu})$ labelled $\underline{\mu}_i$ is a \parallelogram-shape, and it acts trivially otherwise.  

\begin{example}\label{example interlacing pattern} Consider the interlacing pattern in Figure \ref{fig2}. Then $\mathrm{M} = \diag(6,5,4,3,3,1,1)$, 
\begin{equation}
	\begin{split}
		L_{(\underline{\lambda},\underline{\mu})} & = \left\{ \left(\begin{array}{c|c|c|c|cc}
			k_6 & & &&& \\
			\hline
			 & k_5 &&&& \\
			 \hline
			 & & 1 &&&  \\
			 \hline
			 & & & k_3 && \\
			 \hline
			 & & & & 1 & \\
			 & & & & & k_1
		\end{array}\right) \mid k_6,k_5,k_1 \in U(1), \, k_3 \in U(2)\right\} \\
		W_{(\underline{\lambda},\underline{\mu})} & = \{0\} \oplus \CC \oplus \{0\} \oplus \CC^2 \oplus \{0\}.\\
	\end{split}
\end{equation}
The representation of $L_{(\underline{\lambda},\underline{\mu})}$ on $W_{(\underline{\lambda},\underline{\mu})}$ is $(k_6,k_5,k_3,k_1)\cdot (\mathbf{z}_5,\mathbf{z}_3) = (k_5\mathbf{z}_5,k_3\mathbf{z}_3)$.
\end{example}

For $\mu \in \underline{\mu}$, define $r_\mu\geq0$ such that
\begin{equation}
	r_\mu^2 = -\left(\displaystyle\prod_{\substack{ \lambda\in [\underline{\lambda}]\\ \text{\wtrap-shape}}}\left(\mu - \lambda\right)\right)\left(\displaystyle\prod_{\substack{\tau\in [\underline{\mu}]\\ \text{\mtrap-shape}\\\tau \neq \mu}}\frac{1}{\left( \mu - \tau\right)}\right)
\end{equation}
if the connected component of the interlacing pattern of $(\underline{\lambda},\underline{\mu})$ labelled $\mu$ is a \mtrap-shape, and $r_\mu = 0$ otherwise. If the connected component of the interlacing pattern of $(\underline{\lambda},\underline{\mu})$ labelled $\mu$ is a \mtrap-shape, then $r_\mu^2>0$.

Provided that the component of the interlacing pattern of $(\underline{\lambda},\underline{\mu})$ labelled $\mu = \underline{\mu}_i$ is not a \mtrap-shape, define
\begin{equation}\label{Ci definition}
	C_i := C_\mu := \sum_{i=1}^{n+1}\lambda_i - \sum_{i=1}^n\mu_i  -\mu +\sum_{\substack{\tau \in [\underline{\mu}]\\ \text{\mtrap-shape}}}\frac{r_\tau^2}{\mu - \tau}.
\end{equation}
Finally, define a linear symplectic form on $W_{(\underline{\lambda},\underline{\mu})}$ by the formula
\begin{equation}\label{omegalmu definition}
	\omega_{(\underline{\lambda},\underline{\mu})}(\mathbf{u},\mathbf{w}) := \frac{1}{\sqrt{-1}}\sum_{\substack{\mu \in [\underline{\mu}]\\\text{\parallelogram-shape}}}\frac{-\mathbf{u}_\mu^\dagger \mathbf{w}_\mu + \mathbf{w}_\mu^\dagger\mathbf{u}_\mu}{C_\mu},
\end{equation}
for all $\mathbf{u}, \mathbf{w} \in W_{(\underline{\lambda},\underline{\mu})}$, where $\mathbf{u}_\mu$ denotes the projection of $\mathbf{u}$ to the factor $\CC^{n_\mu(\underline{\mu})}$.

\begin{theorem}\label{main theorem}
	Let $K = U(n)$ and let $(\mathcal{O}_\Lambda,\omega_\Lambda,\Phi)$ be the Hamiltonian $K$-manifold associated to a non-increasing sequence $\underline{\lambda} = (\lambda_1, \dots,\lambda_{n+1})$ and an embedding $\varphi\colon K \to U(n+1)$ as in Section~\ref{section the spaces}. Then, the Marle-Guillemin-Sernberg local normal form data of $p \in \mathcal{O}_\Lambda$ is equivalent to 
	\begin{equation}
		(\mathrm{M},L_{(\underline{\lambda},\underline{\mu})},W_{(\underline{\lambda},\underline{\mu})},\omega_{(\underline{\lambda},\underline{\mu})})
	\end{equation}
	where $(\underline{\lambda},\underline{\mu})$ is determined by $p$ as in Section~\ref{section interlacing patterns} and $\mathrm{M},L_{(\underline{\lambda},\underline{\mu})},W_{(\underline{\lambda},\underline{\mu})},$ and $\omega_{(\underline{\lambda},\underline{\mu})}$ are as defined above.
\end{theorem}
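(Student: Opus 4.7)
The plan is to use $K$-equivariance of MGS data to replace $p$ by a convenient representative inside the same $K$-orbit, and then compute the four MGS data directly from its bordered matrix form. First, by equivalence of Hamiltonian $K$-manifolds (allowing an automorphism of $K$ and a shift by a central $\xi$), I may replace $\varphi$ by the standard block embedding of $U(n)$ into $U(n+1)$, so that $(d\varphi)^*$ is the projection sending a Hermitian matrix to its upper-left $n\times n$ block. Since any two points in the same $K$-orbit have equivalent MGS data, I may additionally assume $\Phi(p) = \mathrm{M} = \diag(\mu_1, \dots, \mu_n)$, in which case
\[
    p = \begin{pmatrix} \mathrm{M} & v \\ v^\dagger & t \end{pmatrix}, \qquad v \in \CC^n,\ t \in \R,
\]
and a $K_\mathrm{M}$-rotation puts $v$ into normal form relative to the block decomposition $K_\mathrm{M} \cong U(n_1(\underline{\mu}))\times \dots \times U(n_{m(\underline{\mu})}(\underline{\mu}))$, splitting $v = (v^{(1)}, \dots, v^{(m(\underline{\mu}))})$ with $v^{(s)} \in \CC^{n_s(\underline{\mu})}$.

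Second, I pin down $v$ and $t$ using the constraint $\det(xI_{n+1}-p) = \prod_i(x-\lambda_i)$. Schur-complement expansion gives
\[
    \det(xI_{n+1}-p) = \prod_s (x-\underline{\mu}_s)^{n_s(\underline{\mu})-1}\, B(x),
\]
with $B(x) = (x-t)\prod_s (x-\underline{\mu}_s) - \sum_s \|v^{(s)}\|^2 \prod_{s'\neq s}(x-\underline{\mu}_{s'})$. The interlacing inequalities together with the definition of the interlacing pattern ensure that on the right-hand side $\prod_i(x-\lambda_i)$ the factor $(x-\underline{\mu}_s)$ appears with multiplicity $n_s(\underline{\mu})-1$, $n_s(\underline{\mu})$, or $n_s(\underline{\mu})+1$ according as the component labelled $\underline{\mu}_s$ is \mtrap, \parallelogram, or \wtrap. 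Matching divisibility of $B(x)$ at each $\underline{\mu}_s$ therefore forces $v^{(s)} = 0$ when $\underline{\mu}_s$ is non-\mtrap and yields a closed formula for $\|v^{(s)}\|^2$ in the \mtrap case that agrees with the definition of $r_{\underline{\mu}_s}^2$; the trace equation delivers $t$ in closed form. A final $K_\mathrm{M}$-rotation places every nonzero $v^{(s)}$ on the first basis vector of $\CC^{n_s(\underline{\mu})}$, after which the stabilizer $K_p$ is identified with $L_{(\underline{\lambda},\underline{\mu})}$: commuting with $\mathrm{M}$ gives block-diagonality, and fixing $v^{(s)}$ cuts $U(n_s(\underline{\mu}))$ down to $U(n_s(\underline{\mu})-1)$ exactly when $\underline{\mu}_s$ labels an \mtrap-shape.

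Third and most substantial, I compute the symplectic slice $W_p$ with its isotropy representation, and the induced symplectic form. Tangent vectors to $\mathcal{O}_\Lambda$ are of the form $[X,p]$ for $X\in\mathfrak{u}(n+1)$, and $d\Phi_p[X,p]$ is the upper-left $n\times n$ block of $[X,p]$. Decomposing $X$ against the block structure of $\mathrm{M}$ and the index $n+1$, and peeling off directions tangent to the $K$-orbit, a direct calculation should exhibit a $K_p$-invariant splitting
\[
    \ker d\Phi_p = \bigl(T_p(K\cdot p)\cap \ker d\Phi_p\bigr) \oplus W,
\]
in which $W$ receives a nonzero contribution only from \parallelogram-shaped components, on which $L_i = U(n_i(\underline{\mu}))$ acts by its standard representation on $\CC^{n_i(\underline{\mu})}$, matching \eqref{topological data defn}. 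The cancellations trace back to the same polynomial analysis: \wtrap-directions are absorbed into $T_p(K\cdot p)$, and \mtrap-directions fail to lie in $\ker d\Phi_p$ precisely because $r_{\underline{\mu}_s} \neq 0$ there. Substituting \parallelogram slice vectors into \eqref{symplectic form equation} and simplifying, the constants $C_\mu$ emerge from the interaction of the scalar $t$ with the \mtrap-residues $r_\tau^2$, reproducing \eqref{omegalmu definition}. The hard part will be the second step — converting the combinatorial data of interlacing patterns into the correct residue-and-divisibility statements for $B(x)$ — together with the block-by-block bookkeeping in the third step needed to certify the orbit cancellations in a $K_p$-equivariant way.
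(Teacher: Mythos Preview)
Your proposal follows the same strategy as the paper's proof: reduce to the standard block embedding, move $p$ by the $K$-action so that $\Phi(p)=\mathrm{M}$ is diagonal, use the characteristic--polynomial identity to pin down $\|v^{(s)}\|^2=r_{\underline{\mu}_s}^2$ and hence the isotropy group $K_p=L_{(\underline{\lambda},\underline{\mu})}$, and then compute the symplectic slice $W_p=T_p(K\cdot p)^\omega/(T_p(K\cdot p)\cap T_p(K\cdot p)^\omega)$ block by block using the relation $T_p(K\cdot p)^\omega=\ker d\Phi_p$.  The paper organizes the last step via an injective map $T\colon T_p(K\cdot p)^\omega\to\CC^n$ sending a tangent vector to its off-diagonal ``corner'' column, and then shows that the $i$th block of the image is $C_i\,\CC^{n_i(\underline{\mu})}$ for non-\mtrap\ shapes and is contained in the $K_\mathrm{M}$-orbit directions for \mtrap\ shapes.

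One point to watch: your heuristic for why the non-\parallelogram\ blocks drop out is essentially reversed from what the computation actually shows.  For an \mtrap-block $s$ the raw $x_s$-direction does fail the $\ker d\Phi_p$ constraint $[X,\mathrm{M}]+xv^\dagger+vx^\dagger=0$, but once you impose it (which forces $x_s\in\sqrt{-1}\R\cdot e_1$ and determines the off-diagonal blocks of $X$), the resulting corner lies entirely in $\{Yv^{(s)}:Y\in\mathfrak{u}(n_s(\underline{\mu}))\}$, i.e.\ in $T_p(K_\mathrm{M}\cdot p)$; so \mtrap-blocks are quotiented out by the orbit, not excluded from $\ker d\Phi_p$.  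Conversely, for a \wtrap-block $s$ the direction $x_s$ satisfies the constraints trivially (since $v^{(s)}=0$), but the corner in block $s$ equals $C_s x_s$ with $C_s=0$ by the polynomial identity, so the contribution vanishes outright rather than being ``absorbed into $T_p(K\cdot p)$.''  Keeping this straight will make the block-by-block bookkeeping in your third step go smoothly; the crucial lemma is precisely that $C_\mu=0$ iff the component labelled $\mu$ is a \wtrap-shape.
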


The proof of Theorem \ref{main theorem}, given in Section 4, describes an explicit linear isomorphism between the isotropy representation at $p$ and the symplectic representation $(W_{(\underline{\lambda},\underline{\mu})},\omega_{(\underline{\lambda},\underline{\mu})})$.

\begin{remark}
It is straightforward to check that as $L_{(\underline{\lambda},\underline{\mu})}$-representations,
\begin{equation}\label{subspace m}
	\mathfrak{m}^* \cong \bigoplus_{\substack{\mu \in [\underline{\mu}] \\ \text{\mtrap-shape}}} (\R \times \CC^{n_\mu(\underline{\mu})-1})
\end{equation}
where if the component of the interlacing pattern labelled $\underline{\mu}_i$ is a \mtrap-shape, then the factor $L_i \cong U(n_i(\underline{\mu})-1)$ acts on the corresponding factor $\R \times \CC^{n_i(\underline{\mu})-1}$ as the product of the trivial representation and the standard representation. Otherwise the factor $L_i$ acts trivially. The moment map of the local normal form $M(\mathrm{M},L_{(\underline{\lambda},\underline{\mu})},W_{(\underline{\lambda},\underline{\mu})},\omega_{(\underline{\lambda},\underline{\mu})})$ is easily computed by combining Example \ref{unitary representation example} and \eqref{normal form action and moment map}.
\end{remark}

\begin{example}\label{alamiddine example} 
Let $\lambda_1>\lambda_2>\lambda_3$ and let $p \in \mathcal{O}_\Lambda$ such that the eigenvalues of $\Phi(p)$ are $\mu_1 = \mu_2 = \lambda_2$. The interlacing pattern of $p$ is 
\begin{center}
\begin{tikzpicture}[scale =.7,line cap=round,line join=round,>=triangle 45,x=1cm,y=1cm,scale=1.5, every node/.style={scale=1.5}]
\begin{scriptsize}
\foreach \i in {1,...,3}
{
	\draw [fill=black] (\i-1,0) circle (2.5pt);
}
\draw[color=black] (0,0.3) node {$\lambda_1$};
\draw[color=black] (1,0.3) node {$\lambda_2$};
\draw[color=black] (2,0.3) node {$\lambda_3$};

\foreach \i in {1,...,2}
{
	\draw [fill=black] (\i-.5,-1) circle (2.5pt);
}
\draw[color=black] (.5,-1.3) node {$\mu_1$};
\draw[color=black] (1.5,-1.3) node {$\mu_2$};

\draw [line width=2pt] (1,0)-- (.5,-1);
\draw [line width=2pt] (1,0)-- (1.5,-1);
\draw [line width=2pt] (.5,-1)-- (1.5,-1);

\end{scriptsize}
\end{tikzpicture}
\end{center}
It follows from Theorem \ref{main theorem} that the orbit through $p$ is a Lagrangian $U(2)/U(1)\cong S^3$ and a neighbourhood of this orbit is isomorphic to a neighbourhood of the zero section in $T^*S^3$, equipped with the Hamiltonian action of $U(2)$ by cotangent lift of the action of $U(2)$ on $S^3$.
This particular example was derived by \cite{AL} who used it to show that the Gelfand-Zeitlin systems on regular $U(3)$ coadjoint orbits are isomorphic, in a neighbourhood of this Lagrangian $S^3$ fiber, to an integrable system for the normalized geodesic flow on $T^*S^3$ for the round metric on $S^3$. 
\end{example}

\section{Proof of Theorem \ref{main theorem}}\label{proof of theorem section}

Let $K = U(n)$ and fix an arbitrary non-increasing sequence  $\underline{\lambda} = (\lambda_1, \dots,\lambda_{n+1})$. Several standard reductions are in order. 

First, any two embeddings $K\to U(n+1)$ endow $\mathcal{O}_\Lambda$ with  equivalent Hamiltonian $K$-manifold structures: the restricted coadjoint actions differ by the coadjoint action of an element  $g \in U(n+1)$. Thus, it is sufficient to compute the MGS data with respect to the embedding
\begin{equation}\label{preferred embedding}
	\varphi\colon K \to U(n+1), \quad k \mapsto \left(\begin{array}{c|c}
			1 &  0 \\
			\hline
			0 & k
		\end{array}\right).
\end{equation}
With respect to \eqref{identification}, 
\begin{equation}
	(d\varphi)^*\colon \mathcal{H}_{n+1} \to \mathcal{H}_{n}, \quad (d\varphi)^*(X) = X^{(n)},
\end{equation}
where $X^{(n)}$ denotes the bottom right principal $n\times n$ submatrix of $X$. Thus $\Phi(X) = X^{(n)}$.

Second, it is sufficient to compute the MGS data for points of the form 
\begin{equation}\label{m1}
	p = \left(\begin{array}{c|c}
			c &  \mathbf{z}^\dagger \\
			\hline
			\mathbf{z} & \mathrm{M}
		\end{array}\right) = \left( 
	\begin{array}{c|ccccc}
		c                 & \overline{z}_1 & \overline{z}_2 & \cdots  & \overline{z}_{n-1} & \overline{z}_{n} \\
		\hline
		z_1    &\mu_1 & & & &  \\
		z_2    & &\mu_2 & & & \\
		\vdots            & & & \ddots & &  \\
		z_{n-1}& & & & \mu_{n-1} &  \\
		z_{n}  & & & & & \mu_n \\
	\end{array}
	\right), \quad z_i \in \CC\text{ and } c = \sum_{i=1}^{n+1} \lambda_i - \sum_{i=1}^{n} \mu_i,
\end{equation}
 where $\mu_1\geq \dots \geq \mu_n$. Indeed, every point in $\mathcal{O}_\Lambda$ can be brought to this form by the action $U(n)$, so its MGS data is equivalent to the MGS data of a point of this form.  Note that  $p\in \Phi\n(\mathrm{M})$ if and only if $p$ is of the form \eqref{m1}.
 
Before giving the final reduction, recall from  \cite{GS1} that the condition  $p\in \mathcal{O}_\Lambda$, for $p$ of the form \eqref{m1}, is equivalent to the following equality of characteristic polynomials,
\begin{equation}\label{polynomial}
	\prod_{i=1}^{n+1}\left(x - \lambda_i\right) = (x-c)\prod_{i=1}^n\left( x - \mu_i\right) - \sum_{i=1}^{n}\vert z_i \vert^2\prod_{\substack{j=1\\i\neq j}}^n\left( x - \mu_j\right).
\end{equation}
Re-write $p$ in block form 
\begin{equation}\label{m2}
	p = \left( 
	\begin{array}{c|c|c|c|c}
		c                 & \mathbf{z}_1^\dagger & \mathbf{z}_2^\dagger & \cdots  & \mathbf{z}_{m}^\dagger \\
		\hline
		\mathbf{z}_1    &\underline{\mu}_1I_{n_1(\underline{\mu})} & & &  \\
		\hline
		\mathbf{z}_2    & &\underline{\mu}_2I_{n_2(\underline{\mu})} & &  \\
		\hline
		\vdots            & & & \ddots &   \\
		\hline
		\mathbf{z}_{m}  & & & &  \underline{\mu}_{m}I_{n_{m}(\underline{\mu})} \\
	\end{array}
	\right), \quad \mathbf{z}_i \in M_{n_i(\underline{\mu})\times 1}(\CC).
\end{equation}
where $m=m(\underline{\mu})$. If $\mu = \underline{\mu}_i$, let $\mathbf{z}_\mu = \mathbf{z}_i$ denote the corresponding block. Then \eqref{polynomial} becomes
\begin{equation}\label{polynomial2}
	\prod_{\lambda\in [\underline{\lambda}]}\left(x - \lambda\right)^{n_\lambda(\underline{\lambda})} = (x-c)\prod_{\mu \in [\underline{\mu}]}\left( x - \mu\right)^{n_\mu(\underline{\mu})} - \sum_{\mu \in [\underline{\mu}]}\vert\vert \mathbf{z}_\mu \vert\vert^2(x-\mu)^{n_\mu(\underline{\mu})-1}\prod_{\substack{\tau \in [\underline{\mu}] \\\tau \neq \mu}}\left( x - \tau\right)^{n_\tau(\underline{\tau})}.
\end{equation}

The following lemma is well-known. It's proof is left as an exercise using the fact that $p \in \mathcal{O}_\Lambda$ iff $p$ satisfies \eqref{polynomial2}.
 
\begin{lemma}\label{risquare} \label{un action is multiplicity free} Let $p$ be of the form \eqref{m2}. Then $p\in \mathcal{O}_\Lambda$ if and only if for all $\mu \in \underline{\mu}$, $\vert\vert \mathbf{z}_\mu \vert\vert^2 = r_\mu^2$. Moreover, the action of $K_\mathrm{M}$ on $\Phi\n(\mathrm{M})$ is transitive.
\end{lemma}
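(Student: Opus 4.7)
The plan is to convert the condition $p\in\mathcal{O}_\Lambda$ into a rational function identity via \eqref{polynomial2}, then read off each $\|\mathbf{z}_\mu\|^2$ using uniqueness of partial fractions. Concretely, I would divide both sides of \eqref{polynomial2} by $\prod_{\mu\in[\underline{\mu}]}(x-\mu)^{n_\mu(\underline{\mu})}$ to obtain the equivalent identity
\[
f(x) \;:=\; \frac{\prod_{\lambda\in[\underline{\lambda}]}(x-\lambda)^{n_\lambda(\underline{\lambda})}}{\prod_{\mu\in[\underline{\mu}]}(x-\mu)^{n_\mu(\underline{\mu})}} \;=\; (x-c) - \sum_{\mu\in[\underline{\mu}]}\frac{\|\mathbf{z}_\mu\|^2}{x-\mu}.
\]
The right-hand side is already in partial fraction form, and its polynomial part $x-c$ matches the polynomial part of $f$ at infinity automatically, since the trace identity $c=\sum_i\lambda_i-\sum_i\mu_i$ is built into \eqref{m1}. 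Thus $p\in\mathcal{O}_\Lambda$ reduces to the residue conditions $\|\mathbf{z}_\mu\|^2 = -\mathrm{Res}_{x=\mu}f$ for each $\mu\in[\underline{\mu}]$.

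The heart of the proof is then to verify that these residues match $r_\mu^2$. I would invoke the elementary multiplicity dictionary $n_v(\underline{\lambda})-n_v(\underline{\mu})\in\{+1,\,0,\,-1\}$ according to whether the component of $v$ in the interlacing pattern is a \wtrap-, \parallelogram-, or \mtrap-shape, which is immediate from the definition of the shapes. From this: $f$ is regular at a \parallelogram label $\mu$, yielding $\|\mathbf{z}_\mu\|^2=0=r_\mu^2$, and has a simple pole at an \mtrap label $\mu$. At such $\mu$ the residue computes to a ratio in which \parallelogram factors cancel between numerator and denominator, \wtrap factors contribute $\prod_{\lambda\ \wtrap}(\mu-\lambda)$, and the remaining \mtrap factors ($\tau\neq\mu$) contribute $\prod_{\tau\ \mtrap,\ \tau\neq\mu}(\mu-\tau)^{-1}$; direct comparison with the definition gives $\|\mathbf{z}_\mu\|^2=r_\mu^2$. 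This combinatorial bookkeeping, matching each factor of $f$ with the correct shape type, is the step where care is needed. The strict positivity $r_\mu^2>0$ at \mtrap $\mu$ then either follows a posteriori (since $\|\mathbf{z}_\mu\|^2\geq 0$ for any actual $p\in\mathcal{O}_\Lambda$) or can be checked directly from the alternation of \wtrap and \mtrap values along $\R$ forced by the interlacing inequalities.

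The reverse implication is then automatic: if $\|\mathbf{z}_\mu\|^2=r_\mu^2$ for every $\mu$, both sides of the rational identity agree (polar parts match by the residue computation, polynomial parts by the trace identity), so \eqref{polynomial2} holds and $p\in\mathcal{O}_\Lambda$. For the transitivity claim, I observe that every element of $\Phi^{-1}(\mathrm{M})$ has block form \eqref{m2} with the prescribed norms $\|\mathbf{z}_\mu\|^2=r_\mu^2$, and conjugation by $(k_1,\dots,k_m)\in K_\mathrm{M}\cong\prod_i U(n_i(\underline{\mu}))$ fixes the diagonal block $\mathrm{M}$ while acting on each column block $\mathbf{z}_i$ by the standard representation of $U(n_i(\underline{\mu}))$ on $\CC^{n_i(\underline{\mu})}$. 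Since the standard representation is transitive on each sphere in $\CC^k$, $K_\mathrm{M}$ acts transitively on the product of spheres of the prescribed radii, which is exactly $\Phi^{-1}(\mathrm{M})$.
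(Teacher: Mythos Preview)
The paper does not actually prove this lemma; it explicitly leaves it as an exercise, citing only that $p\in\mathcal{O}_\Lambda$ is equivalent to the characteristic polynomial identity \eqref{polynomial2}. Your partial-fractions argument is a clean and correct way to carry out that exercise, and the transitivity argument via the blockwise standard action of $K_\mathrm{M}$ on the vectors $\mathbf{z}_\mu$ is exactly right.

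One small omission: you split $\mu\in[\underline{\mu}]$ into \parallelogram-shapes and \mtrap-shapes, but there is a third possibility, namely $\mu\in[\underline{\mu}]$ whose component is a \wtrap-shape (e.g.\ the label $6$ in the paper's running example, where $n_6(\underline{\lambda})=2$ and $n_6(\underline{\mu})=1$). At such $\mu$ the function $f$ has a \emph{zero} rather than a pole, so the residue is again $0=r_\mu^2$ and your argument goes through unchanged; you should just mention this case explicitly.

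It is also worth noting that the reduced identity your cancellation step produces,
\[
\prod_{\substack{\lambda\in[\underline{\lambda}]\\ \text{\wtrap-shape}}}(x-\lambda)\;=\;(x-c)\prod_{\substack{\tau\in[\underline{\mu}]\\ \text{\mtrap-shape}}}(x-\tau)\;-\;\sum_{\substack{\mu\in[\underline{\mu}]\\ \text{\mtrap-shape}}}r_\mu^2\prod_{\substack{\tau\in[\underline{\mu}]\\ \text{\mtrap-shape}\\ \tau\neq\mu}}(x-\tau),
\]
is precisely equation \eqref{4.6 eq1}, which the paper later rederives \emph{from} this lemma (in the proof of Lemma~\ref{vanishing of coefficient}) by the same cancellation of \parallelogram-factors and excess \wtrap/\mtrap-factors. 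So your route and the paper's subsequent use of the lemma line up perfectly.
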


The final reduction concerns the isotropy subgroup. Given $(\underline{\lambda},\underline{\mu})$, define $\tilde p \in \mathcal{O}_\Lambda$ of the form \eqref{m2} such that for all $\mu \in [\underline{\mu}]$, 
\begin{equation}\label{m3}
	\mathbf{z}_\mu = \left( 
	\begin{array}{c}
		r_\mu \\
		0 \\
		\vdots \\
		0
	\end{array}
	\right).
\end{equation}
By construction, $K_{\tilde p}=L_{(\underline{\lambda},\underline{\mu})}$.  The MGS data of every other point $p\in \Phi\n(\mathrm{M})$ is equivalent to that of $\tilde p$ by Lemma \ref{un action is multiplicity free}.

\begin{remark}
	Many of the facts mentioned in this section are also useful for studying Gelfand-Zeitlin systems \cite{GS1,CKO}.
\end{remark}

\subsection{The isotropy representation}\label{section computation of isotropy rep}

Continuing from the previous section, this section computes the isotropy representations at the points $\tilde p \in \Phi\n(\mathrm{M})$ as described in \eqref{m2}, \eqref{m3} and Lemma~\ref{risquare}.

\begin{lemma}\label{form of vector spaces} Let $p \in \Phi\n(\mathrm{M})$ and let $c, \mathbf{z}$ be defined as in \eqref{m2}. The  subspace $T_p(K\cdot p)^\omega$ consists of all matrices of the form 
\begin{equation}\label{form of orthocomplement}
	 \left(\begin{array}{c|c}
		0 & (c-\mathrm{M})\mathbf{x}^\dagger+ \mathbf{z}^\dagger X^\dagger\\
		\hline
		(c-\mathrm{M})\mathbf{x}+ X\mathbf{z} &  0
	\end{array}\right), \quad X \in \mathfrak{k}, \, \mathbf{x} \in M_{n\times 1}(\CC)
\end{equation}
such that
\begin{equation}\label{condition}
	\begin{split}
		0 & = \mathbf{x}^\dagger\mathbf{z}+\mathbf{z}^\dagger\mathbf{x} \\
		0 & = \mathbf{x}\mathbf{z}^\dagger+\mathbf{z}\mathbf{x}^\dagger + [X,\mathrm{M}].
	\end{split}	
\end{equation}
The subspace $T_p(K\cdot p)\cap T_p(K\cdot p)^\omega$  consists of all matrices of the form
\begin{equation}\label{orthocomplement intersection}
	\left(\begin{array}{c|c}
		0 & \mathbf{z}^\dagger Y^\dagger \\
		\hline
		Y\mathbf{z} & 0
	\end{array}\right),\quad  Y \in \kk_\mathrm{M}.
\end{equation}

\end{lemma}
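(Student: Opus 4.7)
The plan is to compute $T_p(K\cdot p)^\omega$ by parametrizing $T_p\mathcal{O}_\Lambda$ as $\{[A,p] : A \in \mathfrak{u}(n+1)\}$, expanding $[A,p]$ in $2 \times 2$ block form, and reading off the $\omega$-orthogonality conditions via a trace pairing. The block decomposition is controlled by the embedding \eqref{preferred embedding}, under which $d\varphi(X) = \bigl(\begin{smallmatrix}0 & 0\\ 0 & X\end{smallmatrix}\bigr)$ for $X \in \mathfrak{u}(n)$, together with the block form \eqref{m1} of $p$.

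First I would write a general $A \in \mathfrak{u}(n+1)$ as $A = \bigl(\begin{smallmatrix} i\beta & -\mathbf{x}^\dagger \\ \mathbf{x} & Z \end{smallmatrix}\bigr)$ with $\beta \in \R$, $\mathbf{x} \in \CC^n$, $Z \in \mathfrak{u}(n)$, and perform the matrix multiplication for $[A,p] = Ap - pA$ block-by-block. A direct calculation yields $(1,1)$ block $-(\mathbf{x}^\dagger \mathbf{z} + \mathbf{z}^\dagger \mathbf{x})$, $(2,1)$ block $(c - \mathrm{M})\mathbf{x} + (Z - i\beta I)\mathbf{z}$, and $(2,2)$ block $\mathbf{x}\mathbf{z}^\dagger + \mathbf{z}\mathbf{x}^\dagger + [Z, \mathrm{M}]$, with the $(1,2)$ block equal to the Hermitian conjugate of the $(2,1)$ block. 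Since $i\beta I$ commutes with $\mathrm{M}$ and $Z - i\beta I \in \mathfrak{u}(n)$, I absorb the $\beta$-parameter by setting $X := Z - i\beta I$, so every element of $T_p \mathcal{O}_\Lambda$ is parametrized by $(\mathbf{x}, X) \in \CC^n \times \mathfrak{u}(n)$ with the block expressions above (with $Z-i\beta I$ replaced by $X$ and $[Z,\mathrm{M}]$ by $[X,\mathrm{M}]$).

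Next, using \eqref{symplectic form equation} and cyclicity of trace,
\[
(\omega_\Lambda)_p\bigl([A,p], [d\varphi(X'), p]\bigr) \;=\; -\tfrac{1}{\sqrt{-1}}\Tr\bigl([A,p]\cdot d\varphi(X')\bigr) \;=\; -\tfrac{1}{\sqrt{-1}}\Tr\bigl([A,p]_{22}\,X'\bigr),
\]
for all $X' \in \mathfrak{u}(n)$, where the second equality uses that $d\varphi(X')$ is supported on the bottom-right $n\times n$ block. The real bilinear pairing $(H, X') \mapsto \Tr(HX')$ between Hermitian matrices and anti-Hermitian matrices $X' \in \mathfrak{u}(n)$ is nondegenerate, so the above vanishes for all $X'$ iff $[A,p]_{22} = 0$, i.e.\ $\mathbf{x}\mathbf{z}^\dagger + \mathbf{z}\mathbf{x}^\dagger + [X, \mathrm{M}] = 0$. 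Taking the trace of this identity gives $\mathbf{x}^\dagger \mathbf{z} + \mathbf{z}^\dagger \mathbf{x} = 0$, which forces the $(1,1)$ block of $[A,p]$ to vanish as well. Both equalities of \eqref{condition} and the form \eqref{form of orthocomplement} follow; the converse direction is immediate from the same computation.

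For the intersection $T_p(K \cdot p) \cap T_p(K \cdot p)^\omega$, restrict to vectors of the form $[d\varphi(Y), p]$, which correspond to the parameter values $\mathbf{x} = 0$, $X = Y$. The orthogonality condition then collapses to $[Y, \mathrm{M}] = 0$, that is $Y \in \kk_\mathrm{M}$, and the resulting matrix is exactly \eqref{orthocomplement intersection}. The main obstacle is little more than careful block-matrix bookkeeping and sign management; once the block decomposition of $[A,p]$ and the absorption of $\beta$ into $X$ are in place, the whole argument reduces to the nondegeneracy of the Hermitian/anti-Hermitian trace pairing.
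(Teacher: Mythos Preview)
Your proof is correct and follows essentially the same approach as the paper's: compute $[\xi,p]$ in block form and read off the $\omega$-orthogonality conditions via the trace pairing with $\mathfrak{u}(n)$. Your version is slightly more streamlined in two places---you note that the first equation of \eqref{condition} is the trace of the second (the paper instead derives both by plugging a basis of $\mathfrak{u}(n)$ in for $Y$), and you compute the intersection $T_p(K\cdot p)\cap T_p(K\cdot p)^\omega$ directly rather than invoking the general moment-map identity $T_p(K\cdot p)\cap T_p(K\cdot p)^\omega = T_p(K_{\Phi(p)}\cdot p)$ that the paper cites from \cite{GS4}.
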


\begin{proof} Denote
\[
	\eta := \left(\begin{array}{c|c}
		0 & 0 \\
		\hline
		0 & Y
	\end{array}\right), \quad 
	\xi := \left(\begin{array}{c|c}
		x_0 & -\mathbf{x}^\dagger \\
		\hline
		\mathbf{x} & X
	\end{array}\right), \quad X,Y \in \mathfrak{k}, \, x_0 \in \sqrt{-1}\R, \, \mathbf{x} \in M_{n\times 1}(\CC).
\]
The tangent space $T_p\mathcal{O}_\Lambda$ consists of elements of the form $[\xi,p]$. Since diagonal elements of $\mathfrak{u}(n+1)$ act trivially, set $x_0=0$. Then elements of $T_p\mathcal{O}_\Lambda$ have block form
\[
	[\xi,p] = \left(\begin{array}{c|c}
		-\mathbf{x}^\dagger\mathbf{z}-\mathbf{z}^\dagger\mathbf{x} & (c-\mathrm{M})\mathbf{x}^\dagger+\mathbf{z}^\dagger X^\dagger \\
		\hline
		(c-\mathrm{M})\mathbf{x}+X\mathbf{z} & \mathbf{x}\mathbf{z}^\dagger+\mathbf{z}\mathbf{x}^\dagger + [X,\mathrm{M}]
	\end{array}\right), \quad X \in \mathfrak{k}, \, \mathbf{x} \in M_{n\times 1}(\CC).
\]
Elements of  $T_p(K\cdot p)$  have block form 
\[
	[\eta,p] = \left(\begin{array}{c|c}
		0 & \mathbf{z}^\dagger Y^\dagger \\
		\hline
		Y\mathbf{z} & [Y,\mathrm{M}]
	\end{array}\right), \quad Y \in \kk.
\]
Recall,
\[
	T_p(K\cdot p)^\omega = \left\{ [\xi,p] \in T_p\mathcal{O}_\Lambda \mid (\omega_\Lambda)_p([\xi,p],[\eta,p]) = 0 \, \forall \,Y \in \kk \right\}.
\]
By \eqref{symplectic form equation},
\begin{equation*}
	\begin{split}
		\sqrt{-1}(\omega_\Lambda)_p([\xi,p],[\eta,p]) & = \Tr\left(p[\xi,\eta]\right) \\
		& =  -\Tr(\mathbf{z}^\dagger Y\mathbf{x}) - \Tr(\mathbf{z}\mathbf{x}^\dagger Y) + \Tr(\mathrm{M}[X,Y]) \\
	& = \Tr(([\mathrm{M},X]-\mathbf{x}\mathbf{z}^\dagger - \mathbf{z}\mathbf{x}^\dagger)Y).
	\end{split}
\end{equation*}
Let $\sqrt{-1}E_{i,i}$, $E_{i,j} - E_{j,i}$, and $\sqrt{-1}(E_{i,j} + E_{j,i})$ be standard basis elements for $\kk$ (where $E_{i,j}$ denotes the matrix whose $i,j$-entry is 1 and all other entries are 0). Plugging these elements in for $Y$ yields a system of equations,
\begin{equation}
	\begin{split}
		0 & = x_i\overline{z}_i + z_i \overline{x}_i \quad \forall i \\
		0 & = (\mu_j - \mu_i) (X_{j,i} + X_{i,j}) - (x_j \overline{z}_i + z_j\overline{x}_i - x_i \overline{z}_j - z_i\overline{x}_j) \quad \forall i\neq j\\
		0 & = (\mu_j - \mu_i) (X_{j,i} - X_{i,j}) - (x_j \overline{z}_i + z_j\overline{x}_i + x_i \overline{z}_j + z_i\overline{x}_j) \quad \forall i\neq j,
	\end{split}	
\end{equation}
(where $X_{i,j}$ denotes the $i,j$ entry of $X$) which in turn is equivalent to the system of equations
\begin{equation}
	\begin{split}
		0 & = x_i\overline{z}_i + z_i \overline{x}_i \quad \forall i \\
		0 & = (\mu_j - \mu_i) X_{j,i} - (x_j \overline{z}_i + z_j\overline{x}_i) \quad \forall i\neq j.\\
	\end{split}	
\end{equation}
This system of equations is equivalent to the system of matrix equations \eqref{condition}. It follows from \eqref{condition} that the block diagonal parts of $[\xi,p] \in T_p(K\cdot p)^\omega$ are zero, so $[\xi,p]$ has the form \eqref{form of orthocomplement} subject to the equations \eqref{condition}. By properties of equivariant moment maps, $T_p(K\cdot p)\cap T_p(K\cdot p)^\omega  = T_p(K_\mathrm{M}\cdot p)$ \cite{GS4}. Elements of $ T_p(K_\mathrm{M}\cdot p)$  have block form of \eqref{orthocomplement intersection}, which completes the proof.
\end{proof}

Equations \eqref{condition} dictate the form of the vectors $(c-\mathrm{M})\mathbf{x}+ X\mathbf{z}$, as the next two lemmas demonstrate.

\begin{lemma}\label{Xz formula lemma}
	Let $p \in \Phi\n(\mathrm{M})$ and let $\mathbf{z}$ be defined as in \eqref{m2}. Let $X\in \kk$ and $\mathbf{x}\in M_{n\times 1}(\CC)$ such that
	\begin{equation}\label{condition 2}
		0  = \mathbf{x}\mathbf{z}^\dagger+\mathbf{z}\mathbf{x}^\dagger + [X,\mathrm{M}].
	\end{equation} 
	If the the component of the interlacing pattern of $(\underline{\lambda},\underline{\mu})$ labelled $\mu$ is not a \mtrap-shape, then 
	\[
		(X\mathbf{z})_\mu = \left(\sum_{\substack{\tau \in [\underline{\mu}]\\ \text{\mtrap-shape}}}\frac{r_\tau^2}{\mu - \tau}\right)\mathbf{x}_\mu.
	\]
\end{lemma}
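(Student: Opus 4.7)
The plan is to decompose everything into blocks adapted to the eigenspace decomposition of $\mathrm{M}$, solve \eqref{condition 2} for the off-diagonal blocks $X_{ij}$, then evaluate $(X\mathbf{z})_\mu$ and invoke Lemma~\ref{risquare} to kill the unwanted terms.

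Concretely, index the blocks of $\mathrm{M} = \underline{\mu}_1 I_{n_1(\underline{\mu})} \oplus \cdots \oplus \underline{\mu}_m I_{n_m(\underline{\mu})}$ by $1,\dots,m = m(\underline{\mu})$, and write $X = (X_{ij})$ with $X_{ij} \in M_{n_i(\underline{\mu}) \times n_j(\underline{\mu})}(\CC)$. Since $\mathrm{M}$ is a scalar on each block, the commutator has block form $[X,\mathrm{M}]_{ij} = (\underline{\mu}_j - \underline{\mu}_i) X_{ij}$. Similarly $(\mathbf{x} \mathbf{z}^\dagger)_{ij} = \mathbf{x}_i \mathbf{z}_j^\dagger$ and $(\mathbf{z} \mathbf{x}^\dagger)_{ij} = \mathbf{z}_i \mathbf{x}_j^\dagger$, so \eqref{condition 2} becomes the block-wise identity
\begin{equation*}
    \mathbf{x}_i \mathbf{z}_j^\dagger + \mathbf{z}_i \mathbf{x}_j^\dagger + (\underline{\mu}_j - \underline{\mu}_i) X_{ij} = 0.
\end{equation*}
For $i \neq j$ the values $\underline{\mu}_i, \underline{\mu}_j$ are distinct (they index \emph{distinct} elements of $[\underline{\mu}]$), so this solves
\begin{equation*}
    X_{ij} = \frac{\mathbf{x}_i \mathbf{z}_j^\dagger + \mathbf{z}_i \mathbf{x}_j^\dagger}{\underline{\mu}_i - \underline{\mu}_j}, \qquad i \neq j.
\end{equation*}

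The key observation now is that if the component labelled $\mu = \underline{\mu}_i$ is not a \mtrap-shape, then $r_\mu = 0$ by definition, and hence $\|\mathbf{z}_i\|^2 = r_\mu^2 = 0$ by Lemma~\ref{risquare}. Thus $\mathbf{z}_i = 0$, which makes the diagonal contribution $X_{ii} \mathbf{z}_i$ vanish \emph{regardless} of the (undetermined) value of $X_{ii}$, and also kills the second numerator term in $X_{ij}\mathbf{z}_j$. Substituting and using $\mathbf{z}_j^\dagger \mathbf{z}_j = \|\mathbf{z}_j\|^2 = r_{\underline{\mu}_j}^2$ then gives
\begin{equation*}
    (X\mathbf{z})_i = \sum_{j \neq i} X_{ij} \mathbf{z}_j = \sum_{j \neq i} \frac{r_{\underline{\mu}_j}^2}{\underline{\mu}_i - \underline{\mu}_j}\, \mathbf{x}_i.
\end{equation*}
Since $r_\tau^2 = 0$ unless $\tau$ is a \mtrap-shape, the sum collapses to one over \mtrap-shape $\tau \in [\underline{\mu}]$ (with $\tau \neq \mu$ automatic because $\mu$ itself is not a \mtrap-shape), yielding the claimed formula.

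I expect no serious obstacle; the only conceptual point worth flagging is that one does \emph{not} need to determine the diagonal blocks $X_{ii}$, which are left free by \eqref{condition 2}. That freedom is harmless precisely because $\mathbf{z}_i = 0$ on every non-\mtrap\ block, which is where Lemma~\ref{risquare} does the work. The off-diagonal formula for $X_{ij}$ will also be reused in the next lemma treating the \mtrap-shape case, so it is worth stating it cleanly during the proof.
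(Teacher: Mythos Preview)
Your proof is correct and is essentially the same as the paper's: both solve the off-diagonal blocks $X_{ij}$ from \eqref{condition 2}, use Lemma~\ref{risquare} to get $\mathbf{z}_\mu=0$ for non-\mtrap\ components, and then sum $X_{ij}\mathbf{z}_j$ using $\|\mathbf{z}_j\|^2=r_{\underline{\mu}_j}^2$. Your extra remark that the undetermined diagonal block $X_{ii}$ is harmless because $\mathbf{z}_i=0$ is a nice clarification the paper leaves implicit.
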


\begin{proof}
	Let $\mu \neq \nu$ distinct elements of $\underline{\mu}$. Let $X_{\mu,\nu}$, $\mathbf{x}_\mu$, $\mathbf{z}_\mu$, etc.\ denote the corresponding blocks of  $X$, $\mathbf{x}$, and $\mathbf{z}$. By \eqref{condition 2}, the $\mu,\nu$ block of $X$ is given by the formula
\[
	 X_{\mu,\nu} = \frac{1}{\mu - \nu}(\mathbf{x}_\mu\mathbf{z}_\nu^\dagger+\mathbf{z}_\mu\mathbf{x}_\nu^\dagger), \quad \forall \mu\neq \nu.
\]
By Lemma \ref{risquare}, if the component of the interlacing pattern of $(\underline{\lambda},\underline{\mu})$ labelled $\mu$ is not a \mtrap-shape, then $\mathbf{z}_\mu=0$. Thus
\[
	(X\mathbf{z})_\mu = \sum_{\substack{\tau \in [\underline{\mu}]\\ \tau\neq \mu}} X_{\mu,\tau} \mathbf{z}_\tau  = \sum_{\substack{\tau \in [\underline{\mu}]\\ \tau\neq \mu}} \frac{1}{\mu - \tau}\mathbf{x}_\mu \mathbf{z}_\tau^\dagger\mathbf{z}_\tau = \left(\sum_{\substack{\tau \in [\underline{\mu}]\\ \text{\mtrap-shape}}} \frac{||\mathbf{z}_\tau||^2}{\mu - \tau}\right) \mathbf{x}_\mu = \left(\sum_{\substack{\tau \in [\underline{\mu}]\\ \text{\mtrap-shape}}} \frac{r_\tau^2}{\mu - \tau}\right) \mathbf{x}_\mu. \qedhere
\]
\end{proof}

Recall the definition of $C_\mu$ from \eqref{Ci definition}.

\begin{lemma}\label{vanishing of coefficient} Let $p$, $X$, and $\mathbf{x}$ as in Lemma \ref{Xz formula lemma} such that \eqref{condition 2} holds. Assume that the component of the interlacing pattern of $(\underline{\lambda},\underline{\mu})$ labelled $\mu$ is not a \mtrap-shape. Then, $C_\mu = 0$
	if and only if the component of the interlacing pattern of $(\underline{\lambda},\underline{\mu})$ labelled $\mu$ is a \wtrap-shape.
\end{lemma}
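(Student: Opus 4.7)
The strategy is to reinterpret $C_\mu$ as the value at $x = \mu$ of a rational function arising from the characteristic polynomial identity \eqref{polynomial2}, and then read off the conclusion from the order of vanishing of that function at $\mu$.

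First, introduce the rational function
\begin{equation*}
    S(x) := (x-c) - \sum_{\substack{\tau \in [\underline{\mu}] \\ \text{\mtrap-shape}}} \frac{r_\tau^2}{x-\tau}.
\end{equation*}
The hypothesis that $\mu$ is not a \mtrap-shape guarantees $S$ is regular at $x = \mu$, and inspection of \eqref{Ci definition} gives $C_\mu = -S(\mu)$. Next, combine \eqref{polynomial2} with Lemma \ref{risquare} (so that $\vert\vert \mathbf{z}_\tau\vert\vert^2 = r_\tau^2$, which vanishes unless $\tau$ is a \mtrap-shape) to obtain the factorization
\begin{equation*}
    P(x) := \prod_{\lambda \in [\underline{\lambda}]}(x-\lambda)^{n_\lambda(\underline{\lambda})} = Q(x)\, S(x), \qquad Q(x) := \prod_{\nu \in [\underline{\mu}]}(x-\nu)^{n_\nu(\underline{\mu})},
\end{equation*}
so that $S(x) = P(x)/Q(x)$ as rational functions.

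To evaluate $S(\mu)$, I would cancel the highest common power of $(x-\mu)$ from $P$ and $Q$. The assumption that the component labelled $\mu$ is not a \mtrap-shape is equivalent to $n_\mu(\underline{\lambda}) \geq n_\mu(\underline{\mu})$, with strict inequality in the \wtrap-shape case and equality in the \parallelogram-shape case. After cancellation,
\begin{equation*}
    S(x) = (x-\mu)^{n_\mu(\underline{\lambda}) - n_\mu(\underline{\mu})}\, \frac{\prod_{\lambda \in [\underline{\lambda}],\, \lambda \neq \mu}(x-\lambda)^{n_\lambda(\underline{\lambda})}}{\prod_{\nu \in [\underline{\mu}],\, \nu \neq \mu}(x-\nu)^{n_\nu(\underline{\mu})}},
\end{equation*}
and both products evaluate at $x = \mu$ to nonzero numbers. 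Hence $S(\mu) = 0$ exactly when the exponent on $(x-\mu)$ is strictly positive, i.e., in the \wtrap-shape case; in the \parallelogram-shape case $S(\mu) \neq 0$. Since the \mtrap-shape case is excluded by hypothesis, this yields $C_\mu = -S(\mu) = 0$ if and only if $\mu$ is a \wtrap-shape.

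The only real subtlety is the translation between the shape of the component labelled $\mu$ and the comparison between $n_\mu(\underline{\lambda})$ and $n_\mu(\underline{\mu})$; this is a short combinatorial check from \eqref{interlacing inequalities}, and I do not expect any other substantive obstacle in this approach.
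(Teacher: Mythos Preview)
Your proof is correct and follows essentially the same approach as the paper: both use the characteristic polynomial identity \eqref{polynomial2} together with Lemma~\ref{risquare} to express $C_\mu$ (up to a nonzero factor) as the value at $x=\mu$ of a function whose order of vanishing is read off from the shape of the component. The only cosmetic difference is that the paper first cancels all common factors at the polynomial level to obtain the cleaner identity $\prod_{\text{\wtrap }\lambda}(x-\lambda) = S(x)\prod_{\text{\mtrap }\tau}(x-\tau)$ before evaluating, whereas you write $S(x)=P(x)/Q(x)$ and cancel the power of $(x-\mu)$ directly; the content is the same.
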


\begin{proof} First, note that it is sufficient to prove 
\begin{equation}\label{4.6 eq1}
	\prod_{\substack{\lambda \in [\underline{\lambda}]\\ \text{\wtrap-shape}}}(x- \lambda) = (x- c)\prod_{\substack{\mu \in [\underline{\mu}] \\\text{\mtrap-shape}}}(x- \mu) -\sum_{\substack{\mu \in [\underline{\mu}]\\ \text{\mtrap-shape}}}r_\mu^2\prod_{\substack{\tau \in [\underline{\mu}]\\ \text{\mtrap-shape}\\ \tau \neq \mu}}(x- \tau).
\end{equation}
Indeed, since the component of the interlacing pattern labelled $\mu$ is not a \mtrap-shape, plugging in $x=\mu$ yields
\begin{equation}\label{4.6 eq2}
	\prod_{\substack{\lambda \in [\underline{\lambda}]\\ \text{\wtrap-shape}}}(\mu- \lambda) = \left(\mu- c - \sum_{\substack{\tau \in [\underline{\mu}] \\\text{\mtrap-shape}}}\frac{r_\tau^2}{\mu- \tau}\right) \prod_{\substack{\tau \in [\underline{\mu}]\\ \text{\mtrap-shape}}}(\mu- \tau) = -C_\mu \prod_{\substack{\tau \in [\underline{\mu}]\\ \text{\mtrap-shape}}}(\mu- \tau)
\end{equation}
and the factor 
\begin{equation}\label{4.6 eq3}
	\prod_{\substack{\tau \in [\underline{\mu}]\\ \text{\mtrap-shapes}}}(\mu- \tau)
\end{equation}
is non-zero.

Second, applying Lemma \ref{risquare} ($r_\mu= 0$ when the component labelled $\mu$ is not a \mtrap-shape) and rearranging, observe that
\begin{equation}\label{4.6 eq4}
	\begin{split}
		&(x-c)\prod_{\mu \in [\underline{\mu}]}(x-\mu)^{n_\mu([\underline{\mu}])} - \sum_{\mu\in [\underline{\mu}]} r_\mu^2(x-\mu)^{n_\mu([\underline{\mu}]) -1}\prod_{\substack{\tau \in [\underline{\mu}]\\ \tau \neq \mu}}(x-\tau)^{n_\tau([\underline{\mu}])} \\
		& = (x-c)\prod_{\substack{\mu \in [\underline{\mu}]\\ \text{\mtrap-shape}}}(x-\mu)^{n_\mu([\underline{\mu}])}\prod_{\substack{\tau \in [\underline{\mu}]\\ \text{\wtrap,\parallelogram-shape}}}(x-\tau)^{n_\tau([\underline{\mu}])}\\ & \quad - \sum_{\substack{\mu \in [\underline{\mu}]\\ \text{\mtrap-shape}}}r_\mu^2(x-\mu)^{n_\mu([\underline{\mu}]) -1}\prod_{\substack{\tau \in [\underline{\mu}] \\\text{\mtrap-shape}\\ \tau \neq \mu}}(x-\tau)^{n_\tau([\underline{\mu}])}\prod_{\substack{\tau \in [\underline{\mu}] \\ \text{\wtrap,\parallelogram-shape}}}(x-\tau)^{n_\tau([\underline{\mu}])} \\
		& = \left((x-c)\prod_{\substack{ \mu \in [\underline{\mu}] \\ \text{\mtrap-shape}}}(x-\mu) - \sum_{\substack{\mu\in [\underline{\mu}] \\\text{\mtrap-shape}}}r_\mu^2\prod_{\substack{\tau \in [\underline{\mu}] \\\text{\mtrap-shape}\\ \tau \neq \mu}}(x-\tau)\right)\cdot \prod_{\substack{\tau \in [\underline{\mu}] \\ \text{\mtrap-shape}}}(x-\tau)^{n_\tau([\underline{\mu}]) - 1}\prod_{\substack{\tau \in [\underline{\mu}] \\ \text{\wtrap,\parallelogram-shape}}}(x-\tau)^{n_\tau([\underline{\mu}])}.
	\end{split}
\end{equation}
Then \eqref{4.6 eq1} follows by combining \eqref{4.6 eq4} and \eqref{polynomial2}, which completes the proof.
\end{proof}

For $p\in \Phi\n(\mathrm{M})$, let $V_p\subset \CC^n$ denote the image of injective linear map
\begin{equation}
	T\colon T_p(K \cdot p)^\omega \to \CC^n, \quad \left(\begin{array}{c|c}
		0 & (c-\mathrm{M})\mathbf{x}^\dagger+ \mathbf{z}^\dagger X^\dagger\\
		\hline
		(c-\mathrm{M})\mathbf{x}+ X\mathbf{z} &  0
	\end{array}\right) \mapsto (c-\mathrm{M})\mathbf{x}+ X\mathbf{z}
\end{equation}
and let $U_p \subset V_p$ denote the image of $T_p(K\cdot p)\cap T_p(K\cdot p)^\omega$.  Specialize to the case of $\tilde p$ and recall that $K_{\tilde p} = L_{(\underline{\lambda},\underline{\mu})}$.  The map $T$ is $K_{\tilde p}$-equivariant with respect to the action of $K_{\tilde p}$ on $\CC^n$ as a block-diagonal subgroup of $K=U(n)$ acting by the standard representation.  Decompose $\CC^n = \bigoplus_{i=1}^m \CC^{n_i(\underline{\mu})}$, $m = m(\underline{\mu})$. The subspaces $V_{\tilde p}$ and $U_{\tilde p}$ have the form $\bigoplus_{i=1}^m V_i $ (respectively $\bigoplus_{i=1}^m U_i $) for some subspaces $U_i \subset V_i \subset \CC^{n_i(\underline{\mu})}$. The map $T$ descends to an isomorphism of $K_{\tilde p}$-representations, 
\begin{equation}\label{Wp equivariant isomorphism with VmodU}
	W_{\tilde p} = T_{\tilde p}(K\cdot \tilde p)^\omega/(T_{\tilde p}(K\cdot \tilde p)\cap T_{\tilde p}(K\cdot \tilde p)^\omega) \cong \bigoplus_{i=1}^m V_i/U_i.
\end{equation}
The representation of $K_{\tilde p} = L_1 \times \dots \times L_m$ on the right is given in each component by the inclusion $L_i \subset U(n_i(\underline{\mu}))$ and the standard representation of $ U(n_i(\underline{\mu}))$ on $\CC^{n_i(\underline{\mu})}$. This representation of $L_i$ preserves the subspaces $U_i \subset V_i$ so it induces a representation on $V_i/U_i$.

Recall that if the component of the interlacing pattern labelled $\underline{\mu}_i$ is a \parallelogram-shape, then $L_i = U(n_i(\underline{\mu}))$.  

\begin{proposition}\label{prop VmodU} For all $i = 1, \dots, m$, $m = m(\underline{\mu})$, there is an isomorphism of $L_i$ representations
	\[
		V_i/U_i \cong \left\{\begin{array}{ll}
		\CC^{n_i(\underline{\mu})} & \text{if the component of the interlacing pattern of $(\underline{\lambda},\underline{\mu})$ labelled $\underline{\mu}_i$ is a \parallelogram-shape,}\\
		\{0\} & \text{else,}\\
	\end{array}\right.
	\]
	where $\CC^{n_i(\underline{\mu})}$ denotes the standard representation of $U(n_i(\underline{\mu}))$.
\end{proposition}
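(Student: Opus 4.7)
The plan is to compute the subspaces $V_i, U_i \subseteq \CC^{n_i(\underline{\mu})}$ directly from the descriptions in Lemma \ref{form of vector spaces} and the explicit form of $\tilde p$ given by \eqref{m3}. By construction, the map $T$ is $K_{\tilde p}$-equivariant with respect to the standard action of $L_{(\underline{\lambda},\underline{\mu})}$ on the block decomposition $\CC^n = \bigoplus_i \CC^{n_i(\underline{\mu})}$, so once each $V_i/U_i$ is identified as a real subspace of $\CC^{n_i(\underline{\mu})}$, the $L_i$-representation structure is forced: it must be either the restriction of the standard representation of $U(n_i(\underline{\mu}))$ to $L_i$ or the trivial representation, as in the statement. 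The three cases to handle correspond to the shape of the component of the interlacing pattern labelled $\mu := \underline{\mu}_i$.

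First I would treat the two non-\mtrap-shape cases together. When $\mu$ is either a \wtrap-shape or a \parallelogram-shape, \eqref{m3} gives $\mathbf{z}_\mu = 0$, so $U_i = 0$ and the diagonal $(\mu,\mu)$-block of the second equation in \eqref{condition} is vacuous. Consequently $\mathbf{x}_\mu$ is unconstrained in $\CC^{n_\mu(\underline{\mu})}$, and Lemma \ref{Xz formula lemma}, combined with the identity $c = \sum_i \lambda_i - \sum_i \mu_i$ and the definition \eqref{Ci definition} of $C_\mu$, reduces the $\mu$-block of $(c-\mathrm{M})\mathbf{x} + X\mathbf{z}$ to $C_\mu \mathbf{x}_\mu$. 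Lemma \ref{vanishing of coefficient} then supplies the dichotomy: if $\mu$ is a \wtrap-shape then $C_\mu = 0$ and $V_i = 0$, while if $\mu$ is a \parallelogram-shape then $C_\mu \neq 0$ and $V_i = \CC^{n_\mu(\underline{\mu})}$.

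The main obstacle is the \mtrap-shape case, which requires a direct calculation rather than an appeal to Lemma \ref{Xz formula lemma}. Here $\mathbf{z}_\mu = (r_\mu, 0, \ldots, 0)^\top$ is non-zero, so the $(\mu,\mu)$-block of the second equation in \eqref{condition} forces $\mathbf{x}_\mu$ to have the form $(\sqrt{-1}s, 0, \ldots, 0)^\top$ for some $s \in \R$. The key step is to expand the $\mu$-block of $(c-\mathrm{M})\mathbf{x} + X\mathbf{z}$ as $(c-\mu)\mathbf{x}_\mu + X_{\mu,\mu}\mathbf{z}_\mu + \sum_{\tau \neq \mu} X_{\mu,\tau}\mathbf{z}_\tau$ and to substitute $X_{\mu,\tau} = (\mu-\tau)^{-1}(\mathbf{x}_\mu \mathbf{z}_\tau^\dagger + \mathbf{z}_\mu \mathbf{x}_\tau^\dagger)$ from the proof of Lemma \ref{Xz formula lemma}. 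Since both $\mathbf{x}_\mu$ and $\mathbf{z}_\mu$ point in the first-coordinate direction, the contribution of the sum lies entirely in that direction and is purely imaginary; the first coordinate of the total output is therefore purely imaginary but can be made arbitrary by choosing $(X_{\mu,\mu})_{1,1} \in \sqrt{-1}\R$, while the remaining $n_\mu(\underline{\mu}) - 1$ coordinates can be set to arbitrary complex values via the lower entries of the first column of $X_{\mu,\mu}$. This establishes $V_i = \sqrt{-1}\R \oplus \CC^{n_\mu(\underline{\mu}) - 1}$. The same description applied to $Y_{\mu,\mu}\mathbf{z}_\mu$, which equals $r_\mu$ times the first column of an arbitrary skew-Hermitian matrix $Y_{\mu,\mu} \in \mathfrak{u}(n_\mu(\underline{\mu}))$, shows that $U_i$ occupies the same subspace. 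Hence $U_i = V_i$ and $V_i/U_i = 0$, completing the proof.
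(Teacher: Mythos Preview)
Your argument is correct and, for the \wtrap- and \parallelogram-shape cases, matches the paper's proof essentially line for line: both reduce the $\mu$-block of $(c-\mathrm{M})\mathbf{x}+X\mathbf{z}$ to $C_\mu\mathbf{x}_\mu$ via Lemma~\ref{Xz formula lemma} and then invoke Lemma~\ref{vanishing of coefficient}.

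The \mtrap-shape case is where the two diverge. The paper disposes of it in one line by asserting that $\mathbf{z}_i\neq 0$ forces $U_i=\CC^{n_i(\underline{\mu})}$, so that $V_i/U_i=\{0\}$ trivially. That shortcut is not literally correct: for $\mathbf{z}_i=r_\mu e_1$ one has $\{Y_{i,i}\mathbf{z}_i:Y_{i,i}\in\mathfrak{u}(n_i(\underline{\mu}))\}=\sqrt{-1}\R\oplus\CC^{n_i(\underline{\mu})-1}$, a real hyperplane, since the diagonal entries of a skew-Hermitian matrix are purely imaginary. Your direct computation that $V_i$ coincides with this same hyperplane is therefore the honest way to obtain $V_i/U_i=\{0\}$, and it actually fills a small gap in the paper's presentation. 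One detail worth making explicit in your write-up: the claim that $\sum_{\tau\neq\mu}X_{\mu,\tau}\mathbf{z}_\tau$ has purely imaginary first coordinate uses not only that $\mathbf{x}_\mu,\mathbf{z}_\mu$ lie along $e_1$, but also that each scalar $\mathbf{x}_\tau^\dagger\mathbf{z}_\tau$ is purely imaginary; this follows from the diagonal entries of the second equation in~\eqref{condition} applied to block~$\tau$.
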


\begin{proof}
	In general, 
	\[
		U_i = \{ (Y\mathbf{z})_i  \mid Y \in \kk_{\mathrm{M}} \} = \{ Y_{i,i}\mathbf{z}_i \mid Y_{i,i} \in \mathfrak{u}(n_i(\underline{\mu})) \}.
	\]
	If the component of the interlacing pattern of $(\underline{\lambda},\underline{\mu})$ labelled $\underline{\mu}_i$ is a \mtrap-shape, then,  
	by Lemma \ref{risquare}, $\mathbf{z}_i \neq 0$, so $U_i = \CC^{n_i(\underline{\mu})}$ and $V_i/U_i\cong \{0\}$.  If the component of the interlacing pattern of $(\underline{\lambda},\underline{\mu})$ labelled $\underline{\mu}_i$ is not a \mtrap-shape, then,  
	 $\mathbf{z}_i = 0$, so $U_i = \{0\}$.
	
	It remains to determine the subspace $V_i$ when the component of the interlacing pattern of $(\underline{\lambda},\underline{\mu})$ labelled $\underline{\mu}_i$ is not a \mtrap-shape. 
	In this case, it follows by Lemma \ref{Xz formula lemma} that the block
	\[
		((c-\mathrm{M})\mathbf{x}+ X\mathbf{z})_i = (c-\mathrm{M})\mathbf{x}_i+ (X\mathbf{z})_i = C_i\mathbf{x}_i,
	\]
	where $C_i = C_{\underline{\mu}_i}$ as defined in \eqref{Ci definition}.
	By Lemma \ref{form of vector spaces},  
	\begin{equation}
		\begin{split}
			V_i & =\{ ((c-\mathrm{M})\mathbf{x}+ X\mathbf{z})_i \mid X \in \kk,  \mathbf{x} \in \CC^n, \mathbf{x}\mathbf{z}^\dagger+\mathbf{z}\mathbf{x}^\dagger + [X,\mathrm{M}] \} \\
			& = \{ C_i\mathbf{x}_i \mid  \mathbf{x}_i \in \CC^{n_i(\underline{\mu})} \}.
		\end{split}
	\end{equation}
	By Lemma \ref{vanishing of coefficient},  $C_i = 0$ if and only if the component of the interlacing pattern of $(\underline{\lambda},\underline{\mu})$ labelled $\underline{\mu}_i$ is a \wtrap-shape. This completes the proof.
\end{proof}

Thus $\bigoplus_{i=1}^m V_i/U_i$ is isomorphic to the $L_{(\underline{\lambda},\underline{\mu})}$-representation $W_{(\underline{\lambda},\underline{\mu})}$.

\begin{proposition}\label{Wp symplectic structure}  The linear symplectic structure on $W_{(\underline{\lambda},\underline{\mu})}$ defined via the symplectic form $\overline{\omega}_{\tilde p}$ and the isomorphism \eqref{Wp equivariant isomorphism with VmodU} equals the linear symplectic form $
	\omega_{(\underline{\lambda},\underline{\mu})}$ defined in \eqref{omegalmu definition}.

\end{proposition}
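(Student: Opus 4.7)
The plan is to evaluate $\overline{\omega}_{\tilde p}([\xi, \tilde p], [\xi', \tilde p])$ directly via the formula \eqref{symplectic form equation} and match it with $\omega_{(\underline{\lambda},\underline{\mu})}(\mathbf{u}, \mathbf{w})$, where $\xi, \xi'$ are parameterized by $(X, \mathbf{x})$ and $(X', \mathbf{x}')$ as in Lemma \ref{form of vector spaces}, and $\mathbf{u}, \mathbf{w}$ denote the images of $[\xi, \tilde p], [\xi', \tilde p]$ under the isomorphism $W_{\tilde p} \cong W_{(\underline{\lambda},\underline{\mu})}$ coming from \eqref{Wp equivariant isomorphism with VmodU} and Proposition \ref{prop VmodU}. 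By Lemma \ref{Xz formula lemma}, for each \parallelogram-shape component this image satisfies $\mathbf{u}_\mu = C_\mu \mathbf{x}_\mu$; since $C_\mu \in \R$, substituting into \eqref{omegalmu definition} and using \eqref{symplectic form equation} reduces the claim to the identity
\[
\Tr(\tilde p [\xi, \xi']) = \sum_{\substack{\mu \in [\underline{\mu}] \\ \text{\parallelogram-shape}}} C_\mu \bigl(-\mathbf{x}_\mu^\dagger \mathbf{x}'_\mu + \mathbf{x}'_\mu{}^\dagger \mathbf{x}_\mu\bigr).
\]

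To compute the left hand side, use cyclicity of the trace to write $\Tr(\tilde p[\xi, \xi']) = -\Tr([\xi, \tilde p] \xi')$, then substitute the block form of $[\xi, \tilde p]$ from Lemma \ref{form of vector spaces} with off-diagonal block $A := (c - \mathrm{M})\mathbf{x} + X\mathbf{z}$. Using $X^\dagger = -X$, a short direct calculation yields
\[
\Tr(\tilde p[\xi, \xi']) = \sum_{\mu \in [\underline{\mu}]} (c-\mu)\bigl(-\mathbf{x}_\mu^\dagger \mathbf{x}'_\mu + \mathbf{x}'_\mu{}^\dagger \mathbf{x}_\mu\bigr) + \bigl(\mathbf{z}^\dagger X \mathbf{x}' + \mathbf{x}'{}^\dagger X \mathbf{z}\bigr).
\]
The contributions of \mtrap-shape $\mu$ to the first sum vanish automatically: the $(\mu, \mu)$ block of the constraint \eqref{condition}, combined with the explicit form $\mathbf{z}_\mu = r_\mu \mathbf{e}_1$ prescribed by \eqref{m3} (where $\mathbf{e}_1$ denotes the first standard basis vector), forces $\mathbf{x}_\mu = \sqrt{-1}\, t_\mu \mathbf{e}_1$ for some $t_\mu \in \R$, whence $\mathbf{x}_\mu^\dagger \mathbf{x}'_\mu = t_\mu t'_\mu = \mathbf{x}'_\mu{}^\dagger \mathbf{x}_\mu$.

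For the second bracket, expand block-by-block using $\mathbf{z}_\mu = 0$ for non-\mtrap $\mu$ and the formula $X_{\mu,\nu} = \frac{1}{\mu-\nu}(\mathbf{x}_\mu \mathbf{z}_\nu^\dagger + \mathbf{z}_\mu \mathbf{x}_\nu^\dagger)$ for $\mu \neq \nu$ (derived in the proof of Lemma \ref{Xz formula lemma}). Diagonal \mtrap-\mtrap contributions cancel pairwise, consistent with the freedom in $X_{\mu,\mu}$ for \mtrap $\mu$ that corresponds to choosing a representative in $W_{\tilde p}$. Off-diagonal \mtrap-\mtrap cross terms vanish by the antisymmetry of $\frac{1}{\mu-\nu}$ against a bracket that becomes symmetric under $\mu \leftrightarrow \nu$ once the explicit forms of $\mathbf{x}_\tau, \mathbf{z}_\tau$ for \mtrap $\tau$ are inserted. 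The remaining contributions, with exactly one \mtrap index, combine to produce
\[
\sum_{\substack{\mu \in [\underline{\mu}] \\ \text{not \mtrap-shape}}} \left(\sum_{\substack{\tau \in [\underline{\mu}] \\ \text{\mtrap-shape}}} \frac{r_\tau^2}{\mu - \tau}\right)\bigl(-\mathbf{x}_\mu^\dagger \mathbf{x}'_\mu + \mathbf{x}'_\mu{}^\dagger \mathbf{x}_\mu\bigr),
\]
which combines with the first sum to replace $(c-\mu)$ by $C_\mu$ for every non-\mtrap $\mu$. By Lemma \ref{vanishing of coefficient}, $C_\mu = 0$ for \wtrap-shape $\mu$, so only \parallelogram-shape contributions survive, yielding the desired identity.

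The main obstacle will be the bookkeeping in the expansion of $\mathbf{z}^\dagger X\mathbf{x}' + \mathbf{x}'{}^\dagger X\mathbf{z}$, especially verifying the \mtrap-\mtrap cross-term cancellation via the symmetry argument; this is the most delicate computation in the proof.
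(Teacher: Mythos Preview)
Your proof is correct and follows essentially the same approach as the paper: both compute $\Tr(\tilde p[\xi,\xi'])$ directly via the trace formula and the block decomposition, then invoke Lemma~\ref{Xz formula lemma} to identify the non-\mtrap\ contributions as $C_\mu(-\mathbf{x}_\mu^\dagger\mathbf{x}'_\mu+\mathbf{x}'^\dagger_\mu\mathbf{x}_\mu)$ and pass to the quotient. Your treatment is in fact more careful than the paper's on one point: the paper applies Lemma~\ref{Xz formula lemma} in the last line of \eqref{symplectic form proof eq 1} as if it held for all $i$ and then silently drops the \mtrap\ terms when descending to $W_{\tilde p}$, whereas you explicitly verify (via the constraint $\mathbf{x}_\mu\mathbf{z}_\mu^\dagger+\mathbf{z}_\mu\mathbf{x}_\mu^\dagger=0$ forcing $\mathbf{x}_\mu\in\sqrt{-1}\,\R\,\mathbf{e}_1$, and the resulting symmetry/antisymmetry cancellations) that all \mtrap\ contributions vanish already on $T_{\tilde p}(K\cdot\tilde p)^\omega$.
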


\begin{proof} Denote
\[
	\eta := \left(\begin{array}{c|c}
		0 & -\mathbf{y}^\dagger \\
		\hline
		\mathbf{y} & Y
	\end{array}\right), \quad 
	\xi := \left(\begin{array}{c|c}
		0 & -\mathbf{x}^\dagger \\
		\hline
		\mathbf{x} & X
	\end{array}\right), \quad X,Y \in \mathfrak{k}, \, \mathbf{x},\mathbf{y} \in M_{n\times 1}(\CC).
\]
Then, using Lemma \ref{Xz formula lemma},
\begin{equation}\label{symplectic form proof eq 1}
	\begin{split}
		\sqrt{-1}(\omega_\Lambda)_{\tilde p}([\xi,{\tilde p}],[\eta,{\tilde p}]) & = \Tr\left({\tilde p}[\xi,\eta]\right)	 = \Tr\left([{\tilde p},\xi]\eta\right)	\\
		& = -\Tr\left(\left(\begin{array}{c|c}
			0 & (c-\mathrm{M})\mathbf{x}^\dagger+ \mathbf{z}^\dagger X^\dagger\\
			\hline
			(c-\mathrm{M})\mathbf{x}+ X\mathbf{z} &  0
			\end{array}\right)\left(\begin{array}{c|c}
			0 & -\mathbf{y}^\dagger \\
			\hline
			\mathbf{y} & Y
			\end{array}\right)\right) \\
		&  = -((c-\mathrm{M})\mathbf{x}^\dagger+ \mathbf{z}^\dagger X^\dagger)\mathbf{y} + \Tr(((c-\mathrm{M})\mathbf{x}+ X\mathbf{z})\mathbf{y}^\dagger) \\
		&  = -((c-\mathrm{M})\mathbf{x}^\dagger+ \mathbf{z}^\dagger X^\dagger)\mathbf{y} + \Tr(\mathbf{y}^\dagger((c-\mathrm{M})\mathbf{x}+ X\mathbf{z})) \\
		&  = -(c-\mathrm{M})(\mathbf{x}^\dagger\mathbf{y} - \mathbf{y}^\dagger\mathbf{x}) - \mathbf{z}^\dagger X^\dagger\mathbf{y} +  \mathbf{y}^\dagger X\mathbf{z} \\
		&  = -(c-\mathrm{M})(\mathbf{x}^\dagger\mathbf{y} - \mathbf{y}^\dagger\mathbf{x}) - ( X\mathbf{z})^\dagger\mathbf{y} +  \mathbf{y}^\dagger X\mathbf{z} \\
		&  = -(c-\mathrm{M})(\mathbf{x}^\dagger\mathbf{y} - \mathbf{y}^\dagger\mathbf{x}) +\sum_{i=1}^m\left(\sum_{\substack{\text{\mtrap-shape}\\ j\neq i}}\frac{r_j^2}{\mu_i - \mu_j}\right)(-\mathbf{x}_i^\dagger \mathbf{y}_i + \mathbf{y}_i^\dagger\mathbf{x}_i). \\
	\end{split}
\end{equation}
Viewing $[\xi,{\tilde p}]$ and $[\eta,{\tilde p}]$ as representatives of vectors in the isotropy representation,
\begin{equation}\label{symplectic form proof eq 3}
	\begin{split}
		(\overline{\omega}_\lambda)_{\tilde p}([\xi,{\tilde p}],[\eta,{\tilde p}]) 
		& = \frac{1}{\sqrt{-1}}\sum_{\substack{\text{\parallelogram-shape}\\ i=1}}^m\left(c - \mu_i +\sum_{\substack{\text{\mtrap-shape}\\ j\neq i}}\frac{r_j^2}{\mu_i - \mu_j}\right)(-\mathbf{x}_i^\dagger \mathbf{y}_i + \mathbf{y}_i^\dagger\mathbf{x}_i)\\
		& = \frac{1}{\sqrt{-1}}\sum_{\substack{\text{\parallelogram-shape}\\ i=1}}^mC_i(-\mathbf{x}_i^\dagger \mathbf{y}_i + \mathbf{y}_i^\dagger\mathbf{x}_i).
	\end{split}
\end{equation}
Applying the isomorphism $T\colon W_{\tilde p}\to W_{(\underline{\lambda},\underline{\mu})}$, $[\xi,p]\mapsto \mathbf{u} = (C_i\mathbf{x}_i)_i $, $[\eta,p]\mapsto \mathbf{v} = (C_i\mathbf{y}_i)_i $  yields
\[
	\omega_{(\underline{\lambda},\underline{\mu})}(\mathbf{u},\mathbf{w})  = \frac{1}{\sqrt{-1}}\sum_{\substack{\text{\parallelogram-shape}\\ i=1}}^m\frac{-\mathbf{u}_i^\dagger \mathbf{w}_i + \mathbf{w}_i^\dagger\mathbf{u}_i}{C_i}. \qedhere
\]
\end{proof}

\bibliographystyle{alpha}
\bibliography{../../../math_general/master_bibliography.bib}

\begin{thebibliography}{CKO20}

\bibitem[ACK18]{ACK}
B.~H. An, Y.~Cho, and J.~S. Kim.
\newblock On the {$f$}-vectors of {G}elfand-{C}etlin polytopes.
\newblock {\em European J.~Combin.}, 67:61--77, 2018.

\bibitem[Ala09]{AL}
I.~Alamiddine.
\newblock {\em G\'eom\'etrie de syst\`emes hamiltoniens int\'egrables: Le cas
  du syst\`eme de {G}elfand-{C}etlin}.
\newblock PhD thesis, Universit\'e Toulouse, 2009.

\bibitem[ALL18]{ALL}
A.~Alekseev, J.~Lane, and Y.~Li.
\newblock The {$U(n)$} {G}elfand-{Z}eitlin system as a tropical limit of
  {G}inzburg-{W}einstein diffeomorphisms.
\newblock {\em Philosophical Transactions of the Royal Society A: Mathematical,
  Physical and Engineering Sciences}, 376, 2018.

\bibitem[Aud04]{audin}
Mich{\`e}le Audin.
\newblock {\em Torus actions on symplectic manifolds}, volume~93 of {\em
  Progress in Mathematics}.
\newblock Birkh\"auser Verlag, revised edition, 2004.

\bibitem[BMZ18]{BMZ}
D.~Bouloc, E.~Miranda, and N.~T. Zung.
\newblock Singular fibers of the {G}elfand-{C}etlin system on
  {$\mathfrak{u}(n)^*$}.
\newblock {\em Philosophical Transactions of the Royal Society A: Mathematical,
  Physical and Engineering Sciences}, 376, 2018.

\bibitem[CKO20]{CKO}
Yunhyung Cho, Yoosik Kim, and Yong-Geun Oh.
\newblock Lagrangian fibers of {G}elfand-{C}etlin systems.
\newblock {\em Advances in Mathematics}, 372, 2020.

\bibitem[GS83]{GS1}
V.~Guillemin and S.~Sternberg.
\newblock The {G}elfand-{C}etlin system and quantization of the complex flag
  manifolds.
\newblock {\em J.\ Funct.\ Anal.}, 52(1):106--128, 1983.

\bibitem[GS84a]{GS3}
V.~Guillemin and S.~Sternberg.
\newblock Multiplicity-free spaces.
\newblock {\em J.\ Differential Geom.}, 19(1):31--56, 1984.

\bibitem[GS84b]{GS4}
V.~Guillemin and S.~Sternberg.
\newblock {\em Symplectic techniques in physics}.
\newblock Cambridge University Press, Cambridge, 1984.

\bibitem[GS84c]{GS5}
Victor Guillemin and Shlomo Sternberg.
\newblock A normal form for the moment map.
\newblock In Shlomo Sternberg, editor, {\em Differential geometric methods in
  mathematical physics ({J}erusalem, 1982)}, volume~6, pages 161--175. Reidel,
  1984.

\bibitem[Kir84]{kirwan}
F.~Kirwan.
\newblock Convexity properties of the moment mapping, {III}.
\newblock {\em Inventiones Mathematicae}, 77(3):547--552, 1984.

\bibitem[Kno10]{knop1}
F.~Knop.
\newblock Automorphisms of multiplicity free {H}amiltonian manifolds.
\newblock {\em J. Amer. Math. Soc.}, 24(2):567--601, 2010.

\bibitem[Lan18]{L2}
J.~Lane.
\newblock The geometric structure of symplectic contraction.
\newblock {\em International Mathematics Research Notices}, 2018.

\bibitem[Los09]{losev}
I.~V. Losev.
\newblock Proof of the {K}nop conjecture.
\newblock {\em Ann. Inst. Fourier (Grenoble)}, 59(3):1105--1134, 2009.

\bibitem[Mar85]{MARL}
C.-M. Marle.
\newblock Mod\`ele d'action {H}amiltonienne d'un groupe de {L}ie sur une
  vari\'et\'e symplectique.
\newblock {\em Rend. Sem. Mat. Univ. Politec. Torino}, 43(2):227--251, 1985.

\bibitem[Pab14]{P}
M.~Pabiniak.
\newblock Gromov width of non-regular coadjoint orbits of {$U(n)$}, {$SO(2n)$}
  and {$SO(2n+1)$}.
\newblock {\em Mathematical Research Letters}, 21(1):187--205, 2014.

\end{thebibliography}

\Addresses

\end{document}